%% file: main.tex
\documentclass[a4paper,11pt]{article}
\usepackage[margin=1.0truein]{geometry}
\input{packages}
\input{settings}
\input{macros}

\newcommand{\sigmainit}{\sigma_{\mathrm{init}}}

\title{Parameter-Free Cubic-Regularized Newton Method: Sharp Complexity and Generalized Smoothness}
\author[1]{Shaoying Fang}
\author[1]{Naoki Marumo\thanks{E-mail: \email{marumo.naoki@gmail.com}}}
\author[1,2]{Akiko Takeda}

\affil[1]{\normalsize Graduate School of Information Science and Technology, University of Tokyo, Tokyo, Japan}
\affil[2]{\normalsize RIKEN Center for Advanced Intelligence Project, Tokyo, Japan}

\date{\today}

\begin{document}

\maketitle

\begin{abstract}
  We analyze a variant of the cubic-regularized Newton method for nonconvex optimization.
  This variant is parameter-free in that it requires no prior knowledge of problem-dependent parameters.
  Under the generalized smoothness condition $\|\nabla^3 f(x)\| \leq L_0 + L_1 \|\nabla f(x)\|$, we derive an oracle complexity bound for finding an $(\epsilon, \delta)$-second-order stationary point.
  This assumption is weaker than the generalized smoothness conditions used in existing analyses of second-order methods, while the complexity bound improves upon existing guarantees for parameter-free second-order methods.
  In particular, when $L_1 = 0$, the bound matches the optimal dependence on $L_0$ as well as on $\epsilon$, $\delta$, and the initial function value gap, up to additive logarithmic terms.
  To establish this bound, we derive Taylor-type inequalities and prove their equivalence to the generalized smoothness condition.
\end{abstract}

\smallskip
\begin{description}[leftmargin=!,labelwidth=\widthof{\bfseries Keywords:}]
  \item[Keywords:]
  Nonconvex optimization,
  second-order method,
  cubic-regularized Newton method,
  parameter-free algorithm,
  oracle complexity,
  generalized smoothness
  \item[MSC2020:]
  90C26, 90C30, 90C60, 65K05
\end{description}
\smallskip

\section{Introduction}
This paper studies general nonconvex optimization problems of the form
\begin{align}
  \min_{x \in \R^d} f(x),
\end{align}
where $f \colon \R^d \to \R$ is three times continuously differentiable and bounded below.
We focus on second-order methods, which query an oracle at a given point $x \in \R^d$ to obtain $f(x)$, $\nabla f(x)$, and $\nabla^2 f(x)$.
For such methods, a standard goal is to find an $(\epsilon, \delta)$-second-order stationary point, that is, a point $x \in \R^d$ satisfying $\norm*{\nabla f(x)} \leq \epsilon$ and $\nabla^2 f(x) \succeq -\delta I$, where $\epsilon, \delta > 0$ are accuracy parameters.\footnote{
  Some existing results guarantee only first-order stationarity, that is, they aim to find a point $x \in \R^d$ satisfying $\norm*{\nabla f(x)} \leq \epsilon$.
  In our notation, this corresponds to the case $\delta = \infty$.
}

The seminal work by \citet{nesterov2006cubic} introduced the cubic-regularized Newton (CRN) method for this problem.
Under Lipschitz continuity of the Hessian, i.e.,
\begin{align}
  \norm*{\nabla^2 f(y) - \nabla^2 f(x)} \leq L_0 \norm*{y - x}
  \quad
  \text{for all $x, y \in \R^d$},
  \label{eq:lip_hess}
\end{align}
the method achieves an oracle complexity bound of
\begin{align}
  \O \prn*{\Delta \prn*{\frac{\sqrt{L_0}}{\epsilon^{3/2}} + \frac{L_0^2}{\delta^3}}},
  \label{eq:crn_complexity}
\end{align}
where $\Delta \coloneqq f(x_0) - \inf_{x \in \R^d} f(x)$.
This complexity bound is known to be optimal for second-order methods under this assumption, not only in its dependence on $\epsilon$ and $\delta$ but also in its dependence on $\Delta$ and $L_0$~\citep{carmon2020lower,arjevani2020second}.
This guarantee assumes that the Lipschitz constant $L_0$ in~\cref{eq:lip_hess} is known in advance and is used to set the cubic regularization parameter.
This result has motivated subsequent developments of second-order methods in several directions, including parameter-free algorithms and generalized smoothness.

\emph{Parameter-free} algorithms do not require prior knowledge of problem-dependent parameters such as $L_0$.
Many such second-order methods have been developed based on CRN, trust-region methods, and related frameworks~\citep{cartis2011adaptive,cartis2011adaptiveb,cartis2012complexity,cartis2020concise,grapiglia2017regularized,cartis2019universal,he2026newton,hamad2022consistently,semenov2026gradient,gratton2026fast,marumo2026general}.
Even when parameter-free methods attain the optimal dependence on $\epsilon$, many of them~\citep{cartis2011adaptive,cartis2011adaptiveb,cartis2012complexity,cartis2020concise,grapiglia2017regularized,cartis2019universal,he2026newton,gratton2026fast,marumo2026general} have worse dependence on $L_0$ than in the optimal bound~\cref{eq:crn_complexity}, as summarized in \cref{tab:parameter-free_complexity}.
In particular, when $L_0$ is small, the optimal bound suggests a substantially better complexity bound, but these parameter-free guarantees do not reflect this improvement.

\citet{hamad2022consistently} resolved this issue for first-order stationarity, i.e., $\delta = \infty$.
Their parameter-free method achieves the complexity bound
\begin{align}
  \O \prn*{\Delta \frac{\sqrt{L_0}}{\epsilon^{3/2}}} + \tilde \O(1),
  \label{eq:hamad_complexity}
\end{align}
where $\tilde \O(\cdot)$ hides logarithmic factors.
This result matches the optimal bound~\cref{eq:crn_complexity} when $\delta = \infty$, up to additive logarithmic terms.
Such logarithmic terms typically arise in complexity bounds when parameter-free algorithms adapt to unknown parameters.
We call a complexity bound \emph{sharp} if it matches the optimal dependence on both the accuracy and problem-dependent parameters, up to additive logarithmic terms.
The same work explicitly left open whether an analogous sharp bound can be achieved by a parameter-free method for finding an $(\epsilon, \delta)$-second-order stationary point.

The sharp bound \cref{eq:hamad_complexity} has also been extended beyond a Lipschitz-continuous Hessian.
Under the generalized smoothness assumption that
\begin{align}
  \norm*{\nabla^2 f(y) - \nabla^2 f(x)}
  \leq
  \prn*{L_0 + L_1 \norm*{\nabla f(x)}} \norm*{y - x}
  \label{eq:hessian_gen_smoothness_strong}
\end{align}
for all $x, y \in \R^d$, a parameter-free method achieves a bound of $\O \big(\Delta \big(\sqrt{L_0} \epsilon^{-3/2} + \sqrt{L_1} \epsilon^{-1}\big)\big) + \tilde \O(1)$ when $\delta = \infty$~\citep{semenov2026gradient}.
Although this assumption is weaker than \cref{eq:lip_hess}, it still excludes simple functions such as the quartic function in \cref{ex:quartic}.
A local version of \cref{eq:hessian_gen_smoothness_strong}, where the condition is imposed only on pairs $(x, y) \in \mathcal N_r \coloneqq \Set*{(x, y) \in \R^d \times \R^d}{\norm*{x - y} \leq r}$ for some $r > 0$, has also been studied~\citep{gratton2026fast}.
However, the resulting guarantee incurs an additional $\log \epsilon^{-1}$ factor and has worse dependence on $L_0$ and $L_1$, as shown in \cref{tab:parameter-free_complexity}.

\begin{table}[t]
  \centering
  \renewcommand{\arraystretch}{1.7}
  \caption{
    Parameter-free second-order methods for nonconvex optimization.
    All complexity bounds omit additive $\tilde \O(1)$ terms for brevity.
    The $\delta < \infty$ column indicates whether the method guarantees second-order stationarity.
  }
  \label{tab:parameter-free_complexity}
  \begin{threeparttable}
    \begin{tabular}{@{} l l c l @{}}
      \toprule
      Reference
      & Oracle complexity
      & $\delta < \infty$
      & Smoothness assumption \\
      \midrule
      \citep{cartis2011adaptive,cartis2011adaptiveb}
      & $\O \prn[\Big]{
        \Delta \frac{1 + L_0^{3/2}}{\epsilon^{3/2}}
      }$
      &
      & \cref{eq:lip_hess}\\
      \citep{cartis2012complexity,cartis2020concise}
      & $\O \prn[\Big]{
        \Delta \prn[\Big]{
          \frac{1 + L_0^{3/2}}{\epsilon^{3/2}}
          + \frac{1 + L_0^3}{\delta^3}
        }
      }$
      & $\checkmark$
      & \cref{eq:lip_hess}\\
      \citep{grapiglia2017regularized,cartis2019universal}
      & $\O \prn*{
        \Delta \frac{1 + \sqrt{L_0}}{\epsilon^{3/2}}
      } 
      $
      &
      & \cref{eq:lip_hess}\\
      \citep{he2026newton}
      & $\O \prn*{
        \Delta \prn*{
          \frac{1 + \sqrt{L_0}}{\epsilon^{3/2}}
          + \frac{1 + L_0^2}{\delta^3}
        }
      }$
      & $\checkmark$
      & \cref{eq:lip_hess}\\
      \citep{hamad2022consistently}
      & $\O \prn*{
        \Delta \frac{\sqrt{L_0}}{\epsilon^{3/2}}
        % + \log \frac{\norm*{\nabla f(x_0)}}{\epsilon}
      }$
      &
      & \cref{eq:lip_hess}\\
      \citep{semenov2026gradient}
      & $\O \prn*{
        \Delta \prn*{
          \frac{\sqrt{L_0}}{\epsilon^{3/2}}
          + \frac{\sqrt{L_1}}{\epsilon}
        }
      }$
      &
      & \cref{eq:hessian_gen_smoothness_strong} for all $x, y \in \R^d$\\
      \citep{gratton2026fast}\tnote{*}
      & $\tilde \O \prn*{
        \Delta \prn*{
          1
          + \frac{1}{r^2}
          + L_0^2
          + L_1^2
        } \prn*{
          \frac{1}{\epsilon^{3/2}}
          + \frac{1}{\delta^3}
        }
      }$
      & $\checkmark$
      & \cref{eq:hessian_gen_smoothness_strong} for all $(x, y) \in \mathcal N_r$\\
      \textbf{This work}
      & $\O \prn*{
        \Delta \prn*{
          \frac{\sqrt{L_0}}{\epsilon^{3/2}}
          + \frac{\sqrt{L_1}}{\epsilon}
          + \frac{L_0^2}{\delta^3}
          + \frac{L_1}{\delta}
        }
      }$
      & $\checkmark$
      & \cref{asm:our_hessian_gen_smoothness}\\
      \bottomrule
    \end{tabular}
    \begin{tablenotes}
      \small
      \item[*]
      $\mathcal N_r \coloneqq \Set*{(x, y) \in \R^d \times \R^d}{\norm*{x - y} \leq r}$.
      \citep{gratton2026fast} additionally assumes weak convexity on a sublevel set, i.e., $\nabla^2 f(x) \succeq - \gamma I$ for some $\gamma > 0$.
    \end{tablenotes}
  \end{threeparttable}
  \bigskip
\end{table}

\paragraph{Contributions.}
We analyze a parameter-free variant of CRN for finding an $(\epsilon, \delta)$-second-order stationary point under the following assumption.
\begin{assumption}
  \label{asm:main}
  Let $L_0 > 0$ and $L_1 \geq 0$ be constants.
  \begin{enuminasm}
    \item \label{asm:our_hessian_gen_smoothness}
    $\norm*{\nabla^3 f(x)} \leq L_0 + L_1 \norm*{\nabla f(x)}$ for all $x \in \R^d$.
    \item \label{asm:f_lower_bound}
    $\Delta \coloneqq f(x_0) - \inf_{x \in \R^d} f(x) < + \infty$.
  \end{enuminasm}
\end{assumption}
In contrast to the two-point condition~\cref{eq:hessian_gen_smoothness_strong}, \cref{asm:our_hessian_gen_smoothness} is a pointwise condition on the third derivative.
This assumption naturally generalizes Lipschitz continuity of the Hessian, since \cref{eq:lip_hess} is equivalent to $\norm*{\nabla^3 f(x)} \leq L_0$ for all $x \in \R^d$.
Under \cref{asm:main}, the CRN variant finds an $(\epsilon, \delta)$-second-order stationary point within at most
\begin{align}
  \O \prn*{
    \Delta \prn*{
      \frac{\sqrt{L_0}}{\epsilon^{3/2}}
      + \frac{\sqrt{L_1}}{\epsilon}
      + \frac{L_0^2}{\delta^3}
      + \frac{L_1}{\delta}
    }
  }
  + \tilde \O(1)
  \label{eq:main_complexity}
\end{align}
oracle calls.
Here, $\O(\cdot)$ hides only universal constants, and $\tilde \O(1)$ denotes additive logarithmic terms depending only on quantities other than $\epsilon$ and $\delta$.
The contributions are summarized as follows.
\begin{itemize}
  \item
  \textbf{Weaker assumption.}
  Our pointwise smoothness condition further relaxes the two-point generalized smoothness condition~\cref{eq:hessian_gen_smoothness_strong}, which is used in existing analyses of second-order methods~\citep{semenov2026gradient,gratton2026fast,xie2024trust}.
  Indeed, taking the limit $y \to x$ in~\cref{eq:hessian_gen_smoothness_strong} yields \cref{asm:our_hessian_gen_smoothness}.
  This relaxation is strict, as illustrated by \cref{ex:quartic,ex:smoothabs_xsinx}.
  \item
  \textbf{Sharp bound for second-order stationarity.}
  We close the open question raised by \citet{hamad2022consistently} for second-order stationarity.
  When $L_1 = 0$, our bound~\cref{eq:main_complexity} matches the optimal bound~\cref{eq:crn_complexity} in its dependence on $\Delta$, $L_0$, $\epsilon$, and $\delta$, up to additive logarithmic terms.
  To the best of our knowledge, this is the first sharp bound for parameter-free second-order methods, even in the classical case $L_1 = 0$.
  \item
  \textbf{Universality.}
  The guarantee is universal with respect to the choice of generalized smoothness constants.
  For a fixed function $f$, there may be many admissible pairs $(L_0, L_1)$ satisfying \cref{asm:our_hessian_gen_smoothness}.
  The complexity bound~\cref{eq:main_complexity} holds simultaneously for every such pair.
  Consequently, the method automatically enjoys the best bound obtained by optimizing the guarantee over all admissible pairs.
\end{itemize}
This universality is stronger than parameter-freeness alone.
A parameter-free algorithm may still maintain internal estimates of $L_0$ and $L_1$, in which case its guarantee may be tied to those estimates.
The CRN variant analyzed here does not estimate $(L_0, L_1)$; it only adaptively adjusts a single regularization parameter through backtracking.

\paragraph{Technical overview.}
Two ingredients are central to our analysis: Taylor-type characterizations of generalized smoothness and a refined analysis of backtracking.
Standard analyses of second-order methods rely on Taylor-type inequalities derived from \cref{eq:lip_hess}, which provide upper bounds on the function value and the gradient norm.
We first develop analogous inequalities under \cref{asm:our_hessian_gen_smoothness}.
Compared with the standard Taylor bounds, the resulting bounds are necessarily weaker: they involve hyperbolic-function factors and additional dependence on Hessian information.
A key difficulty is that the subsequent analysis must work with these weaker inequalities.
Since these inequalities are equivalent to \cref{asm:our_hessian_gen_smoothness}, the hyperbolic factors and the additional Hessian dependence are not artifacts of the proof; rather, they are the price of working under this weaker assumption.

The second ingredient is a refined analysis of backtracking.
In CRN methods, the regularization parameter $\sigma > 0$ is commonly adjusted by checking a sufficient decrease condition.
We use a similar backtracking procedure, but we check two acceptance conditions: one for function decrease and one for the gradient norm at the trial point.
In addition, once a value of $\sigma$ has been accepted, the next backtracking search is initialized at $\sigma / 2$.
Although such a halving step is often used to improve practical performance, it plays an essential role in our complexity analysis.
Together, these two ingredients enable us to obtain the bound~\cref{eq:main_complexity} under our weaker generalized smoothness assumption.

\paragraph{Notation.}
For vectors $a, b \in \R^d$, let $\inner*{a}{b}$ denote the Euclidean inner product and let $\norm*{a}$ denote the Euclidean norm.
For a matrix $A \in \R^{m \times n}$, let $\norm*{A}$ denote the operator norm, i.e., the largest singular value of $A$.
For a symmetric third-order tensor $T$, define
\begin{align}
  \norm*{T} \coloneqq \sup_{\norm*{u_1} = \norm*{u_2} = \norm*{u_3} = 1} \abs*{T\brk*{u_1, u_2, u_3}} = \sup_{\norm*{u} = 1} \abs*{T\brk*{u, u, u}}.
  \label{eq:def_tensor_norm}
\end{align}
The second equality follows from \citep[Eq.~(A.7)]{cartis2022evaluation}.

\section{Examples of generalized smoothness}
\label{sec:examples}
We present concrete examples of functions to illustrate the scope of \cref{asm:our_hessian_gen_smoothness}.
The first example satisfies \cref{asm:our_hessian_gen_smoothness} but not the global condition in~\cref{eq:hessian_gen_smoothness_strong}.
The second example gives a stronger separation: it satisfies \cref{asm:our_hessian_gen_smoothness}, whereas \cref{eq:hessian_gen_smoothness_strong} fails even locally.

\begin{example}
  \label{ex:quartic}
  Consider the one-dimensional function $f(x) = \frac{1}{4} x^4$.
  Since $f'(x) = x^3$, $f''(x) = 3 x^2$, and $f'''(x) = 6 x$, we have, for any $\alpha > 0$,
  \begin{align}
    \abs*{f'''(x)}
    =
    6 \abs*{f'(x)}^{1/3}
    \leq
    2 \prn*{
      \frac{\abs*{f'(x)}}{\alpha^2}
      + 2 \alpha
    },
  \end{align}
  where we used the AM--GM inequality $t = (\frac{t^3}{\alpha^2} \cdot \alpha \cdot \alpha)^{1/3} \leq \frac{1}{3} \prn[\big]{\frac{t^3}{\alpha^2} + \alpha + \alpha}$ for $t = \abs*{f'(x)}^{1/3}$.
  Hence, $f$ satisfies \cref{asm:our_hessian_gen_smoothness} with $(L_0, L_1) = (4 \alpha, 2 / \alpha^2)$ for any $\alpha > 0$.

  In contrast, there does not exist $(L_0, L_1)$ such that \cref{eq:hessian_gen_smoothness_strong} holds for all $x, y \in \R$.
  Indeed, if $x = 0$, then \cref{eq:hessian_gen_smoothness_strong} reduces to $\abs*{3 y^2} \leq L_0 \abs*{y}$, which is violated for sufficiently large $\abs*{y}$.
\end{example}
\begin{example}
  \label{ex:smoothabs_xsinx}
  Consider the one-dimensional function $f(x) = \sqrt{1 + x^2} - x \cos x$.
  Since
  \begin{align}
    f'(x) &= x \prn*{1 + x^2}^{-1/2} - \cos x + x \sin x,\\
    f''(x) &= \prn*{1 + x^2}^{-3/2} + 2 \sin x + x \cos x,\\
    f'''(x) &= -3 x \prn*{1 + x^2}^{-5/2} + 3 \cos x - x \sin x,
  \end{align}
  we have
  \begin{align}
    f'''(x)
    =
    - f'(x)
    + x \prn*{1 + x^2}^{-1/2} - 3 x \prn*{1 + x^2}^{-5/2}
    + 2 \cos x.
  \end{align}
  Hence, we have
  \begin{align}
    \abs*{f'''(x)}
    \leq
    \abs*{f'(x)}
    + \abs*{x} \prn*{1 + x^2}^{-1/2}
    + 3 \abs*{x} \prn*{1 + x^2}^{-5/2}
    + 2 \abs*{\cos x}
    \leq
    \abs*{f'(x)} + 6,
  \end{align}
  which shows that $f$ satisfies \cref{asm:our_hessian_gen_smoothness} with $(L_0, L_1) = (6, 1)$.

  In contrast, there does not exist $(L_0, L_1)$ and $r > 0$ such that \cref{eq:hessian_gen_smoothness_strong} holds for all $(x, y) \in \mathcal N_r$.
  For $n \in \N$, let $x_n = 2 n \pi$ and $y_n = x_n + h$, where $h \in (0, r]$ is a small constant.
  Then we have
  \begin{align}
    f''(y_n) - f''(x_n)
    &=
    \brk*{\prn*{1 + y_n^2}^{-3/2} + 2 \sin h + y_n \cos h}
    - \brk*{\prn*{1 + x_n^2}^{-3/2} + x_n}\\
    &=
    (\cos h - 1) x_n + \O(1).
  \end{align}
  Taking $n \to \infty$ yields $\abs*{f''(y_n) - f''(x_n)} \to \infty$, which shows that \cref{eq:hessian_gen_smoothness_strong} cannot hold.
\end{example}

We next consider two benchmark functions for which explicit constants $(L_0, L_1)$ are less transparent.
Although \cref{asm:our_hessian_gen_smoothness} is imposed for every $x \in \R^d$, fixing a point $x \in \R^d$ and computing $(u, v) = \prn{\norm{\nabla f(x)}, \norm{\nabla^3 f(x)}}$ gives a necessary condition on a pair $(L_0, L_1)$, namely $v \leq L_0 + L_1 u$.
By randomly sampling points $x \in \R^d$ and plotting the corresponding pairs $(u, v)$, we can visually inspect which values of $(L_0, L_1)$ are plausible for the function.

\begin{example}
  Consider the Rastrigin function \citep{rastrigin1974extremal}:
  \begin{align}
    f(x_1, x_2)
    &=
    20 + x_1^2 + x_2^2 - 10 \cos(2 \pi x_1) - 10 \cos(2 \pi x_2).
  \end{align}
  The heat map on the left of \cref{fig:rastrigin} shows the function values.
  The white crosses indicate the randomly sampled points $x \in \R^2$.
  The scatter plot on the right shows the pairs $(u, v) = \prn{\norm{\nabla f(x)}, \norm{\nabla^3 f(x)}}$ at the sampled points.
  The plot shows that all sampled pairs $(u, v)$ lie below the affine line $v = 100 + 50 u$.
  This indicates that $(L_0, L_1) \simeq (100, 50)$ is a reasonable scale for \cref{asm:our_hessian_gen_smoothness}.

  In contrast, the classical assumption \cref{eq:lip_hess} requires all points $(u, v)$ in the scatter plot to lie below a horizontal line $v = L_0$.
  This would require roughly $L_0 \simeq 2500$.
  Thus, \cref{asm:our_hessian_gen_smoothness} allows a much smaller value of $L_0$ in this comparison, which suggests improvements in the factors multiplying the $\epsilon^{-3/2}$ and $\delta^{-3}$ terms in the bound~\cref{eq:main_complexity}.
\end{example}

\begin{figure}[t]
  \centering
  \begin{subfigure}[t]{\textwidth}
    \centering
    \includegraphics[width=0.48\textwidth]{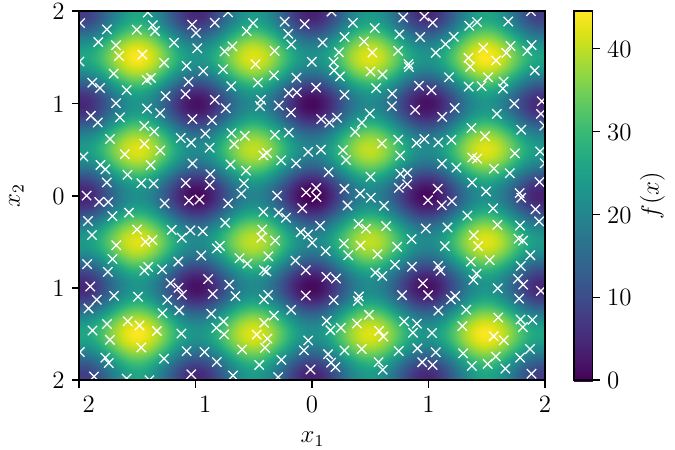}
    \hfill
    \includegraphics[width=0.48\textwidth]{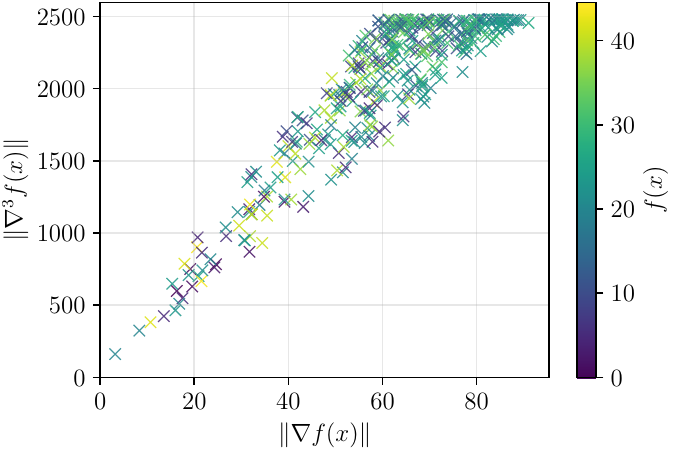}
    \caption{Rastrigin function\label{fig:rastrigin}}
  \end{subfigure}
  \par\bigskip
  \begin{subfigure}[t]{\textwidth}
    \centering
    \includegraphics[width=0.48\textwidth]{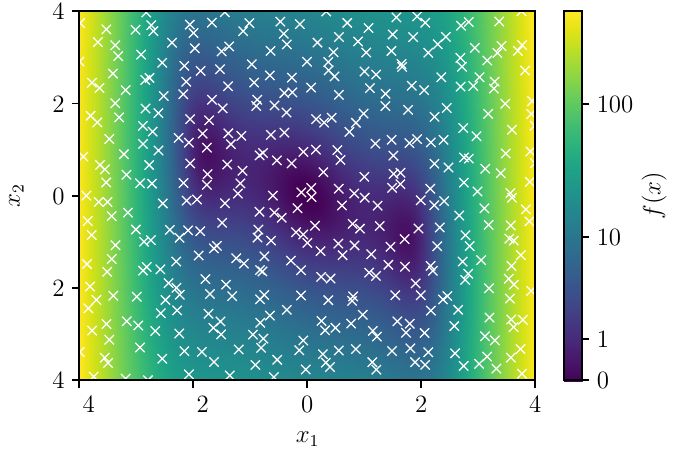}
    \hfill
    \includegraphics[width=0.48\textwidth]{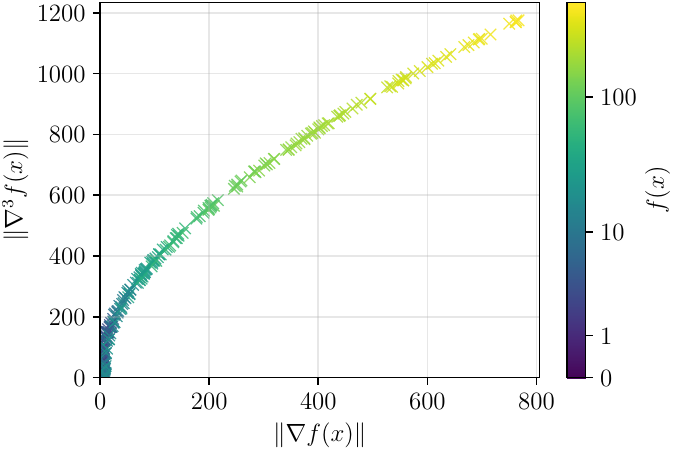}
    \caption{Three-hump camel function\label{fig:camel}}
  \end{subfigure}
  \caption{Benchmark functions and corresponding scatter plots of $\prn{\norm{\nabla f(x)}, \norm{\nabla^3 f(x)}}$ at randomly sampled points $x \in \R^2$.}
\end{figure}

\begin{example}
  Consider the three-hump camel function \citep{branin1972widely}:
  \begin{align}
    f(x_1, x_2)
    &=
    2 x_1^2 - 1.05 x_1^4 + \frac{1}{6} x_1^6 + x_1 x_2 + x_2^2.
  \end{align}
  As in the previous example, \cref{fig:camel} suggests that \cref{asm:our_hessian_gen_smoothness} can allow a much smaller value of $L_0$ than the classical assumption.
  A further point illustrated by this example is the trade-off between $L_0$ and $L_1$.
  For example, the scatter plot suggests that both $(L_0, L_1) \simeq (500, 1)$ and $(L_0, L_1) \simeq (200, 3)$ are consistent with \cref{asm:our_hessian_gen_smoothness}.
  Which of these two pairs gives a better bound in \cref{eq:main_complexity} depends on the values of $\epsilon$ and $\delta$.
  Since our guarantee is universal, it automatically achieves the best bound obtained from such pairs $(L_0, L_1)$.
\end{example}

\section{Related work}
This section reviews related work from several perspectives and positions our contribution within the literature.

\paragraph{Second-order methods for nonconvex optimization.}
Many second-order methods are based on Newton's method.
Since plain Newton steps are not globally reliable in nonconvex optimization, globalization techniques such as regularization and trust-region methods have been developed.
The cubic-regularized Newton (CRN) method \citep{nesterov2006cubic} achieves the oracle complexity~\cref{eq:crn_complexity} under Lipschitz continuity of the Hessian.
Following this work, many second-order methods have been developed; some methods guarantee $\epsilon$-first-order stationarity~\citep{birgin2017worstcase,grapiglia2017regularized,cartis2019universal,curtis2017trust}, while others guarantee $(\epsilon, \delta)$-second-order stationarity~\citep{cartis2011adaptiveb,cartis2012complexity,cartis2020concise,curtis2019inexact,royer2018complexity,royer2020newton,curtis2021trust}.
The term $\O \prn*{\Delta \sqrt{L_0} \epsilon^{-3/2}}$ in~\cref{eq:crn_complexity} cannot be improved~\citep{carmon2020lower}, and the same is true for the term $\O \prn*{\Delta L_0^2 \delta^{-3}}$~\citep{arjevani2020second}.
Thus, the bound~\cref{eq:crn_complexity} is optimal in its dependence on $\Delta$, $L_0$, $\epsilon$, and $\delta$.

We show that a parameter-free variant of CRN achieves the oracle complexity~\cref{eq:main_complexity} under the weaker smoothness condition in \cref{asm:our_hessian_gen_smoothness}.
In particular, when $L_1 = 0$, the resulting guarantee matches the optimal bound~\cref{eq:crn_complexity}, up to additive logarithmic terms.

\paragraph{Generalized smoothness.}
The concept of generalized smoothness was introduced by \citet{zhang2020why} in the analysis of first-order methods.
They analyzed a stochastic gradient method under the following smoothness assumption:
\begin{align}
  \norm*{\nabla^2 f(x)} \leq M_0 + M_1 \norm*{\nabla f(x)}
  \quad
  \text{for all $x \in \R^d$.}
  \label{eq:first_order_gen_smoothness}
\end{align}
Motivated by the practical relevance of this assumption, many first-order methods have since been studied under generalized smoothness, including deterministic methods~\citep{zhang2020improved,chen2023generalized,koloskova2023revisiting,vankov2025optimizing,takezawa2024parameter,gorbunov2025methods,oikonomidis2025nonlinearly} and stochastic methods~\citep{zhang2020improved,chen2023generalized,koloskova2023revisiting,reisizadeh2025variance,faw2023beyond}.
Several variants of \cref{eq:first_order_gen_smoothness} have also been proposed~\citep{chen2023generalized,li2023convex,hubler2024parameter}.
\Cref{asm:our_hessian_gen_smoothness} can be viewed as a natural second-order analogue of \cref{eq:first_order_gen_smoothness}.

Second-order generalized smoothness was introduced in \citep{xie2024trust}.
That work assumes the two-point condition~\cref{eq:hessian_gen_smoothness_strong} for all $(x, y) \in \mathcal N_r$ and analyzes a trust-region method for stochastic nonconvex optimization.
A Newton-type method was later analyzed under the same local condition, together with weak convexity on a sublevel set~\citep{gratton2026fast}.
A different framework, called gradient-normalized smoothness, was studied in~\citep{semenov2026gradient}; the analysis yields a guarantee under the stronger global version of \cref{eq:hessian_gen_smoothness_strong} imposed for all $x, y \in \R^d$.
\Cref{asm:our_hessian_gen_smoothness} is weaker than both the local and global two-point conditions above.

\paragraph{Parameter-free methods.}
Problem-dependent parameters such as Lipschitz constants are often unknown in practice, which makes algorithms that do not require their values attractive.
This has motivated extensive study of parameter-free algorithms; see, e.g., \citep{marumo2024parameter,marumo2025universal,marumo2025parameter,xiong2026parameter,kong2024complexity,yagishita2025simple} for first-order methods and \citep{cartis2011adaptive,cartis2011adaptiveb,cartis2012complexity,cartis2020concise,grapiglia2017regularized,he2026newton,cartis2019universal,hamad2022consistently,semenov2026gradient,gratton2026fast,marumo2026general} for second-order methods.
Existing guarantees for parameter-free second-order methods often recover the optimal dependence on the accuracy parameters but lose the optimal dependence on problem-dependent constants.
Notable exceptions are \citep{hamad2022consistently,semenov2026gradient}, which achieve the optimal dependence on problem-dependent constants up to additive logarithmic terms.
However, both results guarantee only $\epsilon$-first-order stationarity, as shown in \cref{tab:parameter-free_complexity}.
Our result gives a parameter-free method for the stronger goal of finding an $(\epsilon, \delta)$-second-order stationary point.

\section{Taylor-type characterizations of generalized smoothness}
This section presents Taylor-type inequalities that are equivalent to second-order generalized smoothness.

We use the standard hyperbolic functions $\sinh t \coloneqq \prn*{\e^t - \e^{-t}} / 2$ and $\cosh t \coloneqq \prn*{\e^t + \e^{-t}} / 2$.
Throughout the paper, quotients involving these hyperbolic functions that have removable singularities at $t = 0$ are understood by continuous extension.
In particular,
\begin{align}
  \lim_{t \to 0} \frac{\sinh t}{t}
  &=
  1,\qquad
  \lim_{t \to 0} \frac{\cosh t - 1}{t^2}
  =
  \frac{1}{2},\qquad
  \lim_{t \to 0} \frac{\sinh t - t}{t^3}
  =
  \frac{1}{6}.
  \label{eq:hyperbolic_limits}
\end{align}

For $x, s \in \R^d$, define
\begin{align}
  \tilde L(x, s)
  \coloneqq{}&
  L_0 + L_1 \max_{t \in [0, 1]} \norm*{\nabla f(x) + t \nabla^2 f(x) s}\\
  ={}&
  L_0 + L_1 \max \set*{\norm*{\nabla f(x)},\, \norm*{\nabla f(x) + \nabla^2 f(x) s}}.
\end{align}
This quantity can be viewed as a local Lipschitz constant for the Hessian along the line segment from $x$ to $x+s$; it plays an important role in our analysis.

The following theorem gives the Taylor-type inequalities that are equivalent to \cref{asm:our_hessian_gen_smoothness}.
The proof is inspired by \citep[Lemma~2.2]{vankov2025optimizing}, which establishes analogous inequalities under the first-order generalized smoothness condition~\cref{eq:first_order_gen_smoothness}.

\begin{theorem}
  \label{thm:taylor_equivalence}
  Let $L_0, L_1 \geq 0$ and let $f \colon \R^d \to \R$ be a three times continuously differentiable function.
  The following statements are equivalent.
  In the statements involving $s \in \R^d$, write $\alpha \coloneqq \sqrt{L_1} \norm*{s}$.
  \begin{itemize}
    \item
    For all $x \in \R^d$,
    \begin{align}
      \norm*{\nabla^3 f(x)} \leq L_0 + L_1 \norm*{\nabla f(x)}.
      \label{eq:hessian_gen_bounded}
    \end{align}
    \item For all $x, s \in \R^d$,
    \begin{align}
      \norm*{\nabla^2 f(x + s) - \nabla^2 f(x)}
      &\leq
      \tilde L(x, s) \norm*{s} \frac{\sinh \alpha}{\alpha}.
      \label{eq:hessian_taylor_G_bound}
    \end{align}

    \item For all $x, s \in \R^d$,
    \begin{align}
      \norm*{\nabla f(x + s) - \nabla f(x) - \nabla^2 f(x) s}
      &\leq
      \tilde L(x, s) \norm*{s}^2 \frac{\cosh \alpha - 1}{\alpha^2}.
      \label{eq:grad_taylor_G_bound}
    \end{align}

    \item For all $x, s \in \R^d$,
    \begin{align}
      \abs*{f(x + s) - f(x) - \inner*{\nabla f(x)}{s} - \frac{1}{2} \inner*{\nabla^2 f(x) s}{s}}
      &\leq
      \tilde L(x, s) \norm*{s}^3 \frac{\sinh \alpha - \alpha}{\alpha^3}.
      \label{eq:function_taylor_bound}
    \end{align}
  \end{itemize}
\end{theorem}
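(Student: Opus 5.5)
I will prove the chain (i)$\Rightarrow$(ii)$\Rightarrow$(iii)$\Rightarrow$(iv)$\Rightarrow$(i). The three forward implications rest on a single Grönwall-type estimate along the segment $x + ts$, $t\in[0,1]$. The return implication (iv)$\Rightarrow$(i) comes from a limiting argument as $s\to 0$.

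\textbf{Key ODE comparison for (i)$\Rightarrow$(ii).} Fix $x,s$ and set $g(t) \coloneqq \nabla f(x+ts)$ and $H(t)\coloneqq\nabla^2 f(x+ts)$. Let $\phi(t)\coloneqq\norm{H(t)-H(0)}$ and $\psi(t)\coloneqq \norm{g(t)-g(0)-tH(0)s}$.

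By (i), $\norm{\tfrac{d}{dt}H(t)} = \norm{\nabla^3 f(x+ts)[s]}\le (L_0+L_1\norm{g(t)})\norm{s}$. We also have
\begin{align}
  \norm{g(t)} \le \norm{g(0)+tH(0)s} + \psi(t) \le \max\{\norm{\nabla f(x)},\norm{\nabla f(x)+\nabla^2 f(x)s}\} + \psi(t),
\end{align}
where the second inequality holds because $t\mapsto \norm{g(0)+tH(0)s}$ is convex. Moreover $\psi(t)\le \int_0^t \phi(\tau)\norm{s}\,d\tau$. With $\tilde L \coloneqq \tilde L(x,s)$, integrating the bound on $\tfrac{d}{dt}H$ gives
\begin{align}
  \phi(t) \le \tilde L \norm{s} t + L_1\norm{s}^2\int_0^t\!\!\int_0^\tau \phi(r)\,dr\,d\tau .
\end{align}
This is a linear Volterra inequality with a nonnegative kernel. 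Comparing with the solution $u(t)=\tilde L\norm{s}\,\sinh(\alpha t)/\alpha$ of $u''=\alpha^2 u$, $u(0)=0$, $u'(0)=\tilde L\norm{s}$, yields $\phi(t)\le u(t)$. To make the comparison rigorous, I will iterate the integral inequality (Picard iteration / Neumann series, which converges) or use the standard strict-inequality perturbation argument. Evaluating at $t=1$ gives (ii).

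\textbf{(ii)$\Rightarrow$(iii)$\Rightarrow$(iv).} The argument above also delivers $\psi(1)\le\int_0^1 u\,\norm{s} = \tilde L\norm{s}^2(\cosh\alpha-1)/\alpha^2$, and integrating once more gives the function-value bound with $(\sinh\alpha-\alpha)/\alpha^3$. To get the implications in the stated order directly from each previous statement, I apply (ii) at the pair $(x,ts)$. The subtlety is that $\tilde L(x,ts)\le\tilde L(x,s)$, again by convexity of $t\mapsto\norm{\nabla f(x)+t\nabla^2 f(x)s}$ together with its value at $t=0$. This gives
\begin{align}
  \norm{H(t)-H(0)}\le \tilde L(x,s)\norm{s}\,\sinh(\alpha t)/\alpha .
\end{align}
Integrating $g(1)-g(0)-H(0)s=\int_0^1(H(t)-H(0))s\,dt$ yields (iii). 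Applying (iii) at $(x,ts)$ and integrating $f(x+s)-f(x)-\inner{\nabla f(x)}{s}-\tfrac12\inner{\nabla^2 f(x)s}{s}=\int_0^1\inner{g(t)-g(0)-tH(0)s}{s}\,dt$ yields (iv).

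\textbf{(iv)$\Rightarrow$(i).} Take $s=hu$ with $\norm u=1$ and $h\to0^+$. Taylor expansion gives that the left side of (iv) equals $\tfrac{h^3}{6}\abs{\nabla^3 f(x)[u,u,u]}+o(h^3)$. On the right side, $\tilde L(x,hu)\to L_0+L_1\norm{\nabla f(x)}$ and $(\sinh\alpha-\alpha)/\alpha^3\to 1/6$ by \cref{eq:hyperbolic_limits}. Dividing by $h^3/6$, letting $h\to0$, and taking the supremum over $u$ via \cref{eq:def_tensor_norm} gives (i).

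I expect the main obstacle to be the rigorous comparison step in (i)$\Rightarrow$(ii). The bound on $\norm{g(t)}$ depends on $\phi$ through a double integral, so a plain Grönwall inequality does not apply. I also need the convexity argument to be correct in both directions, so that the constant is exactly $\tilde L(x,s)$ with no leftover factor. I would handle the comparison by Picard iteration: the $k$-th iterate equals the $k$-th partial sum of $\tilde L\norm{s}\sum_k \alpha^{2k}t^{2k+1}/(2k+1)!$. This gives exactly the $\sinh$ series, with the remainder term vanishing because $\phi$ is bounded on $[0,1]$.
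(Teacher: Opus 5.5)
Your proposal is correct and follows the paper's chain of implications. The steps (ii)$\Rightarrow$(iii)$\Rightarrow$(iv) and (iv)$\Rightarrow$(i) are essentially the paper's:
\begin{itemize}
\item for (ii)$\Rightarrow$(iii)$\Rightarrow$(iv), apply the previous bound at $(x,ts)$, use $\tilde L(x,ts)\le\tilde L(x,s)$, and integrate;
\item for (iv)$\Rightarrow$(i), set $s=hu$, let $h\to 0^+$, and take the supremum over unit $u$.
\end{itemize}

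The only real difference is the comparison step in (i)$\Rightarrow$(ii). The paper tracks a smooth majorant of the third-derivative norm,
\begin{align}
  \phi(t)=\tilde L(x,s)+\alpha^2\int_0^t\int_0^\tau\|\nabla^3 f(x+\rho s)\|\,\mathrm{d}\rho\,\mathrm{d}\tau .
\end{align}
This $\phi$ is $C^2$ and satisfies $\phi''\le\alpha^2\phi$, $\phi(0)=\tilde L(x,s)$, $\phi'(0)=0$. The paper solves this in two integrating-factor steps, using that $\mathrm{e}^{-\alpha t}(\phi'+\alpha\phi)$ is nonincreasing. The result is the pointwise bound $\|\nabla^3 f(x+ts)\|\le\tilde L(x,s)\cosh(\alpha t)$, which is then integrated once to get (ii).

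You instead track $\|\nabla^2 f(x+ts)-\nabla^2 f(x)\|$ directly. This quantity is only continuous, not necessarily differentiable, so you have to work with the integral (Volterra) inequality and close it by Neumann-series iteration. That is legitimate: the kernel is nonnegative, so $\phi\le T\phi$ implies $\phi\le T^n\phi$. The remainder after $n$ iterations is at most $\sup_{[0,1]}\phi\cdot\alpha^{2n}/(2n)!\to 0$, and the partial sums are exactly the series of $\tilde L(x,s)\|s\|\sinh(\alpha t)/\alpha$.

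What each route buys:
\begin{itemize}
\item The paper's route is shorter. It also gives a slightly stronger intermediate statement, namely a bound on the third derivative itself along the segment.
\item Your route needs no smoothness of the tracked quantity and yields all three Taylor bounds directly from (i) in one pass.
\end{itemize}
Both produce exactly the same constants.
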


\begin{proof}
  We prove the following chain of implications: $\text{\cref{eq:hessian_gen_bounded}} \implies \text{\cref{eq:hessian_taylor_G_bound}} \implies \text{\cref{eq:grad_taylor_G_bound}} \implies \text{\cref{eq:function_taylor_bound}} \implies \text{\cref{eq:hessian_gen_bounded}}$.

  \paragraph{\cref{eq:hessian_gen_bounded} $\implies$ \cref{eq:hessian_taylor_G_bound}.}
  Taylor's theorem gives
  \begin{align}
    \nabla f(x + t s)
    &=
    \nabla f(x)
    + t \nabla^2 f(x) s
    + \int_0^t \int_0^\tau \nabla^3 f(x + \rho s)[s, s] \,\dd \rho \dd \tau,
    \label{eq:equiv_proof_taylor_1}
  \end{align}
  and hence, for all $t \in [0, 1]$,
  \begin{align}
    \norm*{\nabla f(x + t s)}
    &\leq
    \max_{\tau \in [0, 1]} \norm*{\nabla f(x) + \tau \nabla^2 f(x) s}
    + \norm*{s}^2 \int_0^t \int_0^\tau \norm*{\nabla^3 f(x + \rho s)} \,\dd \rho \dd \tau.
  \end{align}
  Plugging this inequality into $\norm*{\nabla^3 f(x + ts)} \leq L_0 + L_1 \norm*{\nabla f(x + ts)}$ and using the definitions of $\tilde L(x, s)$ and $\alpha$ yields
  \begin{align}
    \norm*{\nabla^3 f(x + t s)}
    &\leq
    \tilde L(x, s) + \alpha^2 \int_0^t \int_0^\tau \norm*{\nabla^3 f(x + \rho s)} \,\dd \rho \dd \tau
    \eqqcolon \phi(t).
    \label{eq:def_phi_and_ode}
  \end{align}
  Since $\phi'(t) = \alpha^2 \int_0^t \norm*{\nabla^3 f(x + \rho s)} \,\dd \rho$ and $\phi''(t) = \alpha^2 \norm*{\nabla^3 f(x + t s)}$, the above inequality implies $\phi''(t) \leq \alpha^2 \phi(t)$.
  We next derive an upper bound on $\phi(t)$ from this differential inequality.

  Observe that
  \begin{align}
    \frac{\dd}{\dd t} \brk*{
      \e^{- \alpha t} \prn*{ \phi'(t) + \alpha \phi(t) }
    }
    =
    \e^{- \alpha t} \prn*{ \phi''(t) - \alpha^2 \phi(t) }
    \leq
    0.
  \end{align}
  Integrating this inequality gives $\int_0^t \frac{\dd}{\dd \tau} \brk*{ \e^{-\alpha \tau} \prn*{\phi'(\tau) + \alpha \phi(\tau)}} \dd \tau \leq 0$, and hence
  \begin{align}
    \e^{- \alpha t} \prn*{ \phi'(t) + \alpha \phi(t) }
    \leq
    \phi'(0) + \alpha \phi(0)
    =
    \alpha \tilde L(x, s),
  \end{align}
  where we have used $\phi(0) = \tilde L(x, s)$ and $\phi'(0) = 0$.
  Multiplying both sides by $\e^{2 \alpha t}$ gives
  \begin{align}
    \frac{\dd}{\dd t} \prn*{ \e^{\alpha t} \phi(t) }
    =
    \e^{\alpha t} \prn*{ \phi'(t) + \alpha \phi(t) }
    \leq
    \alpha \tilde L(x, s) \e^{2 \alpha t}.
  \end{align}
  Again, integrating this inequality yields
  \begin{align}
    \e^{\alpha t} \phi(t)
    \leq
    \phi(0) + \alpha \tilde L(x, s) \int_0^t \e^{2 \alpha \tau} \dd \tau
    =
    \tilde L(x, s) \frac{\e^{2 \alpha t} + 1}{2},
  \end{align}
  where we have used $\phi(0) = \tilde L(x, s)$.
  We thus obtain $\phi(t) \leq \tilde L(x, s) \cosh(\alpha t)$.
  Hence, \cref{eq:def_phi_and_ode} gives
  \begin{align}
    \norm*{\nabla^3 f(x + t s)}
    \leq
    \tilde L(x, s) \cosh(\alpha t).
  \end{align}

  The fundamental theorem of calculus gives
  \begin{align}
    \nabla^2 f(x + s) - \nabla^2 f(x)
    &=
    \int_0^1 \nabla^3 f(x + t s)[s] \,\dd t,
  \end{align}
  and hence
  \begin{align}
    \norm*{\nabla^2 f(x + s) - \nabla^2 f(x)}
    &\leq
    \norm*{s} \int_0^1 \norm*{\nabla^3 f(x + t s)} \,\dd t\\
    &\leq
    \tilde L(x, s) \norm*{s} \int_0^1 \cosh(\alpha t) \,\dd t
    =
    \tilde L(x, s) \norm*{s} \frac{\sinh \alpha}{\alpha},
  \end{align}
  which proves \cref{eq:hessian_taylor_G_bound}.

  \paragraph{\cref{eq:hessian_taylor_G_bound} $\implies$ \cref{eq:grad_taylor_G_bound}.}
  The fundamental theorem of calculus gives
  \begin{align}
    \nabla f(x + s) - \nabla f(x) - \nabla^2 f(x) s
    =
    \int_0^1 \prn*{\nabla^2 f(x + t s) - \nabla^2 f(x)} s,
  \end{align}
  and hence
  \begin{align}
    \norm*{\nabla f(x + s) - \nabla f(x) - \nabla^2 f(x) s}
    &\leq
    \norm*{s} \int_0^1 \norm*{\nabla^2 f(x + t s) - \nabla^2 f(x)} \,\dd t.
  \end{align}
  For $t \in [0, 1]$, applying \cref{eq:hessian_taylor_G_bound} with $s$ replaced by $ts$ and hence $\alpha$ replaced by $\alpha t$, we obtain
  \begin{align}
    \norm*{\nabla^2 f(x + t s) - \nabla^2 f(x)}
    \leq
    \tilde L(x, ts) \norm*{s} \frac{\sinh(\alpha t)}{\alpha}
    \leq
    \tilde L(x, s) \norm*{s} \frac{\sinh(\alpha t)}{\alpha}.
  \end{align}
  Combining these inequalities and using $\int_0^1 \sinh(\alpha t) \,\dd t = \frac{\cosh \alpha - 1}{\alpha}$ proves \cref{eq:grad_taylor_G_bound}.

  \paragraph{\cref{eq:grad_taylor_G_bound} $\implies$ \cref{eq:function_taylor_bound}.}
  The fundamental theorem of calculus gives
  \begin{align}
    f(x + s) - f(x) - \inner*{\nabla f(x)}{s} - \frac{1}{2} \inner*{\nabla^2 f(x) s}{s}
    &=
    \int_0^1 \inner*{\nabla f(x + t s) - \nabla f(x) - t \nabla^2 f(x) s}{s} \,\dd t,
  \end{align}
  and hence
  \begin{align}
    &\mathInd
    \abs*{f(x + s) - f(x) - \inner*{\nabla f(x)}{s} - \frac{1}{2} \inner*{\nabla^2 f(x) s}{s}}\\
    &\leq
    \norm*{s} \int_0^1 \norm*{\nabla f(x + t s) - \nabla f(x) - t \nabla^2 f(x) s} \,\dd t.
  \end{align}
  For $t \in [0, 1]$, replacing $s$ with $ts$ in~\cref{eq:grad_taylor_G_bound} gives
  \begin{align}
    \norm*{\nabla f(x + t s) - \nabla f(x) - t \nabla^2 f(x) s}
    \leq
    \tilde L(x, ts) \norm*{s}^2 \frac{\cosh(\alpha t) - 1}{\alpha^2}
    \leq
    \tilde L(x, s) \norm*{s}^2 \frac{\cosh(\alpha t) - 1}{\alpha^2}.
  \end{align}
  Combining these inequalities and using $\int_0^1 \prn*{\cosh(\alpha t) - 1} \,\dd t = \frac{\sinh \alpha - \alpha}{\alpha}$ proves \cref{eq:function_taylor_bound}.

  \paragraph{\cref{eq:function_taylor_bound} $\implies$ \cref{eq:hessian_gen_bounded}.}
  Let $u \in \R^d$ satisfy $\norm*{u} = 1$.
  For $t > 0$, replacing $s$ with $t u$ in~\cref{eq:function_taylor_bound} and dividing by $t^3$, we obtain
  \begin{align}
    \frac{1}{t^3} \abs*{
      f(x + t u) - f(x) - \inner*{\nabla f(x)}{t u} - \frac{1}{2} \inner*{\nabla^2 f(x) t u}{t u}
    }
    \leq
    \tilde L(x, t u)
    \frac{\sinh(\sqrt{L_1} t) - (\sqrt{L_1} t)}{(\sqrt{L_1} t)^3}.
  \end{align}
  Taking the limit as $t \to 0$ and using the third equation in~\cref{eq:hyperbolic_limits} gives
  \begin{align}
    \frac{1}{6} \abs*{\nabla^3 f(x)\brk*{u, u, u}}
    \leq
    \tilde L(x, \0) \cdot \frac{1}{6}
    =
    \frac{1}{6} \prn*{L_0 + L_1 \norm*{\nabla f(x)}}.
    \label{eq:third_derivative_bound}
  \end{align}
  Taking the supremum over $u$ and using \cref{eq:def_tensor_norm} proves \cref{eq:hessian_gen_bounded}.
\end{proof}

When $L_1 = 0$, we have $\tilde L(x, s) = L_0$ and $\alpha = 0$.
Thus, by \cref{eq:hyperbolic_limits}, the Taylor-type bounds~\cref{eq:hessian_taylor_G_bound,eq:grad_taylor_G_bound,eq:function_taylor_bound} reduce to the standard bounds under \cref{eq:lip_hess}:
\begin{align}
  \norm*{\nabla^2 f(x + s) - \nabla^2 f(x)}
  &\leq
  L_0 \norm*{s},\\
  \norm*{\nabla f(x + s) - \nabla f(x) - \nabla^2 f(x) s}
  &\leq
  \frac{L_0}{2} \norm*{s}^2,\\
  \abs*{f(x + s) - f(x) - \inner*{\nabla f(x)}{s} - \frac{1}{2} \inner*{\nabla^2 f(x) s}{s}}
  &\leq
  \frac{L_0}{6} \norm*{s}^3.
\end{align}

We also compare the Hessian-difference bound~\cref{eq:hessian_taylor_G_bound} with the existing two-point generalized smoothness condition~\cref{eq:hessian_gen_smoothness_strong}.
Setting $y = x + s$ in \cref{eq:hessian_taylor_G_bound} gives
\begin{align}
  \norm*{\nabla^2 f(y) - \nabla^2 f(x)}
  &\leq
  \prn[\Big]{
    L_0 + L_1 \max_{t \in [0, 1]}
    \norm*{\nabla f(x) + t \nabla^2 f(x) (y - x)}
  }
  \frac{\sinh \prn*{\sqrt{L_1} \norm*{y - x}}}{\sqrt{L_1}},
\end{align}
which is weaker than \cref{eq:hessian_gen_smoothness_strong}.
The equivalence in \cref{thm:taylor_equivalence} shows that this weaker bound is intrinsic to \cref{asm:our_hessian_gen_smoothness}, and this is one of the main difficulties in our analysis.

\section{Cubic-regularized Newton method with backtracking}
This section describes the method used in our analysis, summarized in \cref{alg:cubic_newton_backtracking}.
The algorithm itself is not new; rather, it is a variant of the standard cubic-regularized Newton method~\citep{nesterov2006cubic,cartis2011adaptive,cartis2011adaptiveb} with a minor modification to the backtracking conditions.
Moreover, the method is parameter-free in the sense that it requires no prior knowledge of problem-dependent parameters such as $L_0$ and $L_1$, while still achieving the desired complexity bound.

The algorithm generates a sequence of iterates $(x_k)_{k \in \N}$.
In the remainder of the paper, we use the notation
\begin{align}
  g_k \coloneqq \nabla f(x_k), \qquad H_k \coloneqq \nabla^2 f(x_k), \qquad
  \mu_k \coloneqq \max \set*{-\lambda_{\min}(H_k),\, 0},
  \label{eq:def_g_H_mu}
\end{align}
where $\lambda_{\min}(\cdot)$ denotes the minimum eigenvalue.

\begin{algorithm}[t]
  \caption{Cubic-regularized Newton method with backtracking}
  \label{alg:cubic_newton_backtracking}
  \begin{algorithmic}[1]
    \Require $x_0 \in \R^d$, $\sigmainit > 0$
    \State $\sigma \gets \sigmainit$
    \For{$k = 0, 1, 2, \ldots$}
      \State $g_k \gets \nabla f(x_k)$, $H_k \gets \nabla^2 f(x_k)$
      \label{alg-line:compute_g_H}
      \Loop
        \State Compute a trial step $s \in \R^d$ satisfying \cref{eq:subproblem_fo_condition,eq:subproblem_so_condition} by approximately solving \cref{eq:subproblem}
        \If{\cref{eq:accept_decrease_condition,eq:accept_grad_condition} hold}
        \label{alg-line:check_conditions}
          \State $(\sigma_k, s_k) \gets (\sigma, s)$
          \Comment{Accept $(\sigma, s)$}
          \State \textbf{break}
        \Else
          \State $\sigma \gets 2 \sigma$
        \EndIf
      \EndLoop
      \State $x_{k+1} \gets x_k + s_k$
      \State $\sigma \gets \sigma / 2$
      \label{alg-line:sigma_halving}
    \EndFor
  \end{algorithmic}
\end{algorithm}

\subsection{Algorithm}
At iteration $k$, the cubic-regularized Newton method computes the step $s_k \in \R^d$ by approximately solving the following subproblem:
\begin{align}
  \min_{s \in \R^d} \ 
  \set*{\inner*{g_k}{s} + \frac{1}{2} \inner*{H_k s}{s} + \frac{\sigma}{3} \norm*{s}^3},
  \label{eq:subproblem}
\end{align}
where $\sigma > 0$ is a regularization parameter.
Then the iterate is updated as $x_{k+1} = x_k + s_k$.
The parameter $\sigma$ is adjusted by the backtracking procedure described later.

As an approximate solution to \cref{eq:subproblem}, we require a step $s \in \R^d$ satisfying the following first- and second-order optimality conditions:
\begin{align}
  \norm[\big]{g_k + H_k s + \sigma \norm*{s} s} &\leq \frac{\sigma}{12} \norm*{s}^2,
  \label{eq:subproblem_fo_condition}\\
  H_k + \frac{7}{6} \sigma \norm*{s} I &\succeq O.
  \label{eq:subproblem_so_condition}
\end{align}
Any optimal solution $s^* \in \R^d$ to \cref{eq:subproblem} satisfies the following conditions \citep[Theorem~3.1]{cartis2011adaptive}:
\begin{align}
  g_k + H_k s^* + \sigma \norm*{s^*} s^* = \0,\qquad
  H_k + \sigma \norm*{s^*} I \succeq O.
\end{align}
The requirements in \cref{eq:subproblem_fo_condition,eq:subproblem_so_condition} are relaxations of these exact conditions.
Similar inexact relaxations have been used in the literature (see, e.g., \citep{cartis2019universal,cartis2022evaluation}).

Methods for solving the cubic-regularized subproblem~\cref{eq:subproblem} have been studied extensively: examples include reductions to one-dimensional equations~\citep{nesterov2006cubic,gould2010solving,gao2022approximate}, Lanczos-type methods~\citep{carmon2018analysis,carmon2020first,cartis2011adaptive,gould2020error}, reformulations as eigenvalue problems~\citep{jia2022solving,lieder2020solving}, and accelerated projected gradient methods~\citep{jiang2021accelerated}.
Any of these methods can be used in the algorithm, provided that the returned approximate solution $s \in \R^d$ satisfies \cref{eq:subproblem_fo_condition,eq:subproblem_so_condition}.
We note that, once $g_k$ and $H_k$ have been computed, solving the subproblem requires no additional oracle calls, although it may incur arithmetic cost.

\paragraph{Backtracking for $\sigma$.}
To choose the regularization parameter $\sigma$, we employ a backtracking procedure.
Specifically, we repeatedly double $\sigma$ until the computed approximate solution $s \in \R^d$ to the subproblem satisfies the following two conditions:
\begin{align}
  f(x_k + s) - f(x_k) &\leq -\frac{\sigma}{6} \norm*{s}^3,
  \label{eq:accept_decrease_condition}\\
  \norm*{\nabla f(x_k + s)} &\leq \frac{19}{12} \sigma \norm*{s}^2.
  \label{eq:accept_grad_condition}
\end{align}
We denote the accepted pair $(\sigma, s)$ by $(\sigma_k, s_k)$.
Conditions of the same form as \cref{eq:accept_decrease_condition,eq:accept_grad_condition} can be found in \citep{birgin2017use,curtis2017trust} and \citep{cartis2019universal}, respectively.
Combining \cref{eq:accept_decrease_condition,eq:accept_grad_condition} gives
\begin{align}
  f(x_k + s) - f(x_k)
  \leq
  - \frac{c}{\sqrt{\sigma}} \norm*{\nabla f(x_k + s)}^{3/2},
\end{align}
for a constant $c > 0$; conditions of this weaker form are also used in the literature \citep{grapiglia2017regularized,grapiglia2020tensor}.
To accommodate generalized smoothness, we check the two conditions~\cref{eq:accept_decrease_condition,eq:accept_grad_condition} separately.

Once $s_k$ has been computed, we set $\sigma \gets \sigma / 2$ for the next iteration, as specified in Line~\ref{alg-line:sigma_halving} of \cref{alg:cubic_newton_backtracking}.
In the classical case $L_1 = 0$, this halving step is often introduced for better practical performance, although many complexity analyses do not rely on it.
In the generalized smoothness setting, however, this step plays an essential role in our analysis.
To understand why, let us compare the Taylor bound in~\cref{eq:function_taylor_bound} with the subproblem~\cref{eq:subproblem}.
For the subproblem objective to majorize $f(x_k + s) - f(x_k)$, the parameter $\sigma$ should be large enough to satisfy
\begin{align}
  \sigma
  \geq
  3 \tilde L(x_k, s) \frac{\sinh \alpha - \alpha}{\alpha^3}
  \geq
  \frac{\tilde L(x_k, s)}{2}
  =
  \frac{1}{2} \prn*{
    L_0 + L_1 \max \set*{\norm*{g_k},\, \norm*{g_k + H_k s}}
  }
  \label{eq:sigma_lower_bound_intuition}
\end{align}
where $\alpha = \sqrt{L_1} \norm*{s}$ and the second inequality follows from $\frac{\sinh t - t}{t^3} \geq \frac{1}{6}$.
Thus, when $L_1 > 0$, the backtracking procedure may need to increase $\sigma$ when $\norm*{g_k}$ is large.
Without the halving step, once $\sigma$ has been increased due to a large gradient norm, it may remain unnecessarily large, leading to a worse complexity bound.
See \cref{rem:sigma_halving} for a more detailed explanation of its effect on the complexity bound.

\subsection{Termination of backtracking}
This subsection establishes finite termination of the backtracking procedure in \cref{alg:cubic_newton_backtracking}.
In the case $L_1 = 0$, the standard Taylor remainder bounds show that the backtracking conditions~\cref{eq:accept_decrease_condition,eq:accept_grad_condition} are satisfied once $\sigma \geq L_0$.
For general $L_1 > 0$, the analogous lower bound on $\sigma$ is given in \cref{eq:sigma_lower_bound_intuition}.
The difficulty here is that this bound involves the trial step $s$, which is computed after $\sigma$ is fixed.
The following lemma gives a sufficient condition on $\sigma$ that is independent of the trial step.
Since $\sigma$ is doubled during backtracking, finite termination follows immediately from this condition.
\begin{lemma}
  \label{lem:sigma_acceptance_condition}
  Suppose that \cref{asm:our_hessian_gen_smoothness} holds and that
  \begin{align}
    \sigma \geq L_0 + \frac{12}{11} \prn*{
      L_1 \norm*{g_k} + \sqrt{L_1} \mu_k
    }.
    \label{eq:sigma_acceptance_bound}
  \end{align}
  If $s \in \R^d$ satisfies \cref{eq:subproblem_fo_condition,eq:subproblem_so_condition}, then \cref{eq:accept_decrease_condition,eq:accept_grad_condition} hold.
\end{lemma}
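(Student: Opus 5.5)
The plan is to apply the Taylor-type bounds of \cref{thm:taylor_equivalence} with $x = x_k$ and the trial step $s$. The main difficulty is that $\tilde L(x_k, s)$ and $\alpha = \sqrt{L_1}\norm*{s}$ depend on $s$. I would remove this dependence using the two subproblem conditions \cref{eq:subproblem_fo_condition,eq:subproblem_so_condition}, together with the lower bound \cref{eq:sigma_acceptance_bound}. This yields an absolute upper bound on $\alpha$ and a bound $\tilde L(x_k,s) \le c\,\sigma$ for a suitable constant $c$. Once these are in place, \cref{eq:accept_decrease_condition,eq:accept_grad_condition} follow as in the classical $L_1 = 0$ argument.

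\emph{Step 1 (size of the step).} Take the inner product of the residual in \cref{eq:subproblem_fo_condition} with $s$, and use $\inner*{H_k s}{s} \ge -\mu_k\norm*{s}^2$. This gives
\begin{align}
  \tfrac{11}{12}\sigma\norm*{s}^2 \le \norm*{g_k} + \mu_k\norm*{s}.
\end{align}
Now insert \cref{eq:sigma_acceptance_bound}, which gives $\tfrac{11}{12}\sigma \ge L_1\norm*{g_k} + \sqrt{L_1}\mu_k$. If $\alpha > 1$, the left-hand side would exceed $\norm*{g_k}\alpha^2 + \mu_k\norm*{s}\alpha$, which is a contradiction. Hence $\alpha \le 1$, and every hyperbolic factor in \cref{thm:taylor_equivalence} is within a fixed constant of its $\alpha = 0$ value. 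For instance, $(\sinh\alpha-\alpha)/\alpha^3 \le \sinh 1 - 1$ and $(\cosh\alpha-1)/\alpha^2 \le \cosh 1 - 1$.

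I would also try to retain the quantitative form of this step. It gives $L_1\norm*{s}^2 \cdot \tfrac{11}{12}\sigma \le \dots$, which should let me absorb $L_1$-terms that are quadratic in $\norm*{s}$, rather than relying only on $\alpha \le 1$.

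\emph{Step 2 (bounding $\tilde L$).} By the triangle inequality and \cref{eq:subproblem_fo_condition},
\begin{align}
  \norm*{g_k + H_k s} \le \tfrac{13}{12}\sigma\norm*{s}^2 .
\end{align}
Therefore
\begin{align}
  \tilde L(x_k,s) \le L_0 + L_1\max\set*{\norm*{g_k},\, \tfrac{13}{12}\sigma\norm*{s}^2}.
\end{align}
The $\norm*{g_k}$ branch is controlled directly by \cref{eq:sigma_acceptance_bound}. The other branch equals $\tfrac{13}{12}\sigma\alpha^2$, and here Step 1 has to be used sharply. Substituting the Step 1 inequality gives
\begin{align}
  L_1\sigma\norm*{s}^2 \le \tfrac{12}{11}\bigl(L_1\norm*{g_k} + \sqrt{L_1}\mu_k\,\alpha\bigr),
\end{align}
which is again dominated by the right-hand side of \cref{eq:sigma_acceptance_bound} because $\alpha\le1$. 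The outcome is $\tilde L(x_k,s) \le \kappa\,\sigma$ for a moderate constant $\kappa$.

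\emph{Step 3 (acceptance conditions).} For the decrease condition, I would combine the first-order condition with \cref{eq:subproblem_so_condition}, which gives $\inner*{H_k s}{s} \ge -\tfrac76\sigma\norm*{s}^3$. This yields
\begin{align}
  \inner*{g_k}{s} + \tfrac12\inner*{H_k s}{s} \le -\tfrac13\sigma\norm*{s}^3 .
\end{align}
Then \cref{eq:function_taylor_bound} gives
\begin{align}
  f(x_k+s) - f(x_k) \le \bigl(-\tfrac13 + \kappa(\sinh1-1)\bigr)\sigma\norm*{s}^3,
\end{align}
which is at most $-\tfrac16\sigma\norm*{s}^3$ provided $\kappa(\sinh 1 - 1) \le \tfrac16$. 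For the gradient condition, I would write
\begin{align}
  \norm*{\nabla f(x_k+s)} \le \norm*{g_k + H_k s} + \tilde L(x_k,s)\norm*{s}^2(\cosh1-1),
\end{align}
using \cref{eq:grad_taylor_G_bound}. The first term is at most $\tfrac{13}{12}\sigma\norm*{s}^2$ by Step 2, so it suffices that $\kappa(\cosh 1 - 1) \le \tfrac12$.

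The main obstacle is the constant bookkeeping in Steps 1 and 2. The specific constants $\tfrac{12}{11}$, $\tfrac{7}{6}$, $\tfrac{1}{12}$ and $\tfrac{19}{12}$ suggest that the bound $\alpha\le1$ must be sharpened, or that the factor $(\sinh\alpha-\alpha)/\alpha^3$ must be coupled with $\tilde L$ more carefully, to make $\kappa$ small enough. If the crude bound fails, I would first keep $\alpha$ as a variable, bound $\tilde L(x_k,s)(\sinh\alpha-\alpha)/\alpha^3$ as a function of $\alpha$ using Step 1, and check monotonicity on $[0,1]$.
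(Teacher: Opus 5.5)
Your Steps 1 and 3 coincide with the paper's argument: the contradiction showing $\alpha \le 1$, the estimate $\inner{g_k}{s} + \frac12\inner{H_k s}{s} \le -\frac{\sigma}{3}\norm{s}^3$, and the split $\frac{13}{12} + \frac12 = \frac{19}{12}$ for the gradient. The gap is in Step 2, which does not deliver what Step 3 needs, and your fallback does not repair it.

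With the hyperbolic factors evaluated at $\alpha = 1$, you need $\kappa \le 1/(6(\sinh 1 - 1)) \approx 0.95$ and $\kappa \le 1/(2(\cosh 1 - 1)) \approx 0.92$. No uniform $\kappa < 1$ can exist: for $L_1 = 0$ and $\sigma = L_0$ one has $\tilde L(x_k, s) = \sigma$. Your Step 2 in fact only gives $\kappa = \frac{13}{12}$. Keeping $\alpha$ as a variable does not help. On the branch $\norm{g_k + H_k s}$ of the max, the triangle-inequality bound $\norm{g_k + H_k s} \le \frac{13}{12}\sigma\norm{s}^2$ combines with $L_0 \le \sigma(1 - \alpha^2)$, which is the best Step 1 and \cref{eq:sigma_acceptance_bound} give. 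The result is only $\tilde L(x_k, s) \le \sigma(1 + \alpha^2/12)$, and $(1 + \alpha^2/12)(\sinh\alpha - \alpha)/\alpha^3 > \frac16$ for every $\alpha > 0$. What you need is a bound on $\tilde L(x_k,s)$ that \emph{decreases} in $\alpha$, strongly enough to offset the growth of the hyperbolic factors.

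The missing idea is a sharper bound on $\norm{g_k + H_k s}$ that exploits the near-alignment of $g_k + H_k s$ with $-s$; this is \cref{lem:Gk_s_bound} in the paper. Square \cref{eq:subproblem_fo_condition} and add $2 \cdot \frac{11}{12}\sigma\norm{s}^2 \norm{g_k + H_k s} \le (11/12)^2\sigma^2\norm{s}^4 + \norm{g_k + H_k s}^2$. After cancellation this gives $\norm{g_k + H_k s} + \frac{\sigma}{12}\norm{s}^2 \le -\frac{12}{11}\inner{g_k + H_k s}{s}/\norm{s} \le \frac{12}{11}(\norm{g_k} + \mu_k\norm{s})$. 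By Step 1, the same right-hand side also dominates $\norm{g_k} + \frac{\sigma}{12}\norm{s}^2$. Multiplying by $L_1$ and using $\alpha \le 1$ together with \cref{eq:sigma_acceptance_bound} yields $\tilde L(x_k, s) \le \sigma(1 - \alpha^2/12)$, which is \cref{lem:M0_M1_Gk_bound}.

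The proof then closes with the power-series inequalities $(1 - t^2/12)(\cosh t - 1)/t^2 \le \frac12$ and $(1 - t^2/12)(\sinh t - t)/t^3 \le \frac16$ from \cref{lem:hyperbolic_bounds}. These make the Taylor remainders at most $\frac{\sigma}{2}\norm{s}^2$ and $\frac{\sigma}{6}\norm{s}^3$ for every $\alpha$, which is exactly what your Step 3 requires.
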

In the case $L_1 = 0$, the bound~\cref{eq:sigma_acceptance_bound} reduces to $\sigma \geq L_0$, which matches the standard result for functions with Lipschitz-continuous Hessians (see, e.g., \citep[Eq.~(5.7)]{cartis2011adaptive}).
Thus, the lemma extends this standard guarantee to generalized smoothness.

The rest of this section is devoted to proving \cref{lem:sigma_acceptance_condition}.
For $s \in \R^d$, define
\begin{align}
  G_k(s) \coloneqq \max \set*{\norm*{g_k}, \norm*{g_k + H_k s}}.
  \label{eq:def_Gk_s}
\end{align}
Then $\tilde L(x_k, s) = L_0 + L_1 G_k(s)$.
The following lemma bounds $G_k(s)$ when the trial step $s \in \R^d$ satisfies an approximate first-order condition for the subproblem.
We state the result for a general parameter $\theta \in [0, 1)$ so that the argument is not obscured by the particular constant $\frac{1}{12}$ in \cref{eq:subproblem_fo_condition}.

\begin{lemma}
  \label{lem:Gk_s_bound}
  Let $\theta \in [0, 1)$ and $\sigma > 0$.
  Suppose that $s \in \R^d$ satisfies
  \begin{align}
    \norm[\big]{g_k + H_k s + \sigma \norm*{s} s} \leq \theta \sigma \norm*{s}^2.
    \label{eq:subproblem_theta_fo_condition}
  \end{align}
  Then, the following hold:
  \begin{align}
    (1 - \theta) \sigma \norm*{s}^2
    &\leq
    \norm*{g_k} + \mu_k \norm*{s},
    \label{eq:sigma_s2_lower_bound}\\
    G_k(s) + \theta \sigma \norm*{s}^2
    &\leq
    \frac{\norm*{g_k} + \mu_k \norm*{s}}{1 - \theta}.
    \label{eq:Gk_s_theta_bound}
  \end{align}
\end{lemma}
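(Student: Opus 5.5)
Write $r \coloneqq g_k + H_k s + \sigma\norm*{s}s$, so that $\norm*{r} \le \theta\sigma\norm*{s}^2$ by \cref{eq:subproblem_theta_fo_condition}, and set $q \coloneqq \sigma\norm*{s}^2$ and $B \coloneqq \norm*{g_k} + \mu_k\norm*{s}$. If $s = \0$, both claims are trivial, so assume $s \neq \0$. The basic tool is the scalar quantity $\rho \coloneqq \inner*{r}{s}/\norm*{s}$, which satisfies $\abs*{\rho} \le \norm*{r} \le \theta q$.

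\emph{First inequality.} I would take the inner product of $r$ with $s$:
\begin{align}
  \inner*{g_k}{s} + \inner*{H_k s}{s} + q\norm*{s} = \rho\norm*{s}.
\end{align}
Next I use $\inner*{g_k}{s} \ge -\norm*{g_k}\norm*{s}$ and $\inner*{H_k s}{s} \ge -\mu_k\norm*{s}^2$, which holds by the definition of $\mu_k$. Dividing by $\norm*{s}$ gives the key relation $q - \rho \le B$. Since $\rho \le \theta q$, this yields $(1-\theta)q \le B$, which is \cref{eq:sigma_s2_lower_bound}.

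\emph{Second inequality.} I treat the two terms in the maximum defining $G_k(s)$ separately. For the term $\norm*{g_k}$, the relation $\norm*{g_k} \le B$ together with $\theta q \le \theta B/(1-\theta)$ gives $\norm*{g_k} + \theta q \le B/(1-\theta)$.

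For the term $w \coloneqq g_k + H_k s$, the naive estimate $\norm*{w} \le q + \norm*{r}$ loses too much: it only yields the constant $(1+2\theta)/(1-\theta)$. So I would use the sharper relation $q - \rho \le B$ directly. It suffices to prove $\norm*{w} + \theta q \le (q-\rho)/(1-\theta)$. Since $w = r - \sigma\norm*{s}s$, we have $\norm*{w}^2 = \norm*{r}^2 - 2q\rho + q^2 \le \theta^2 q^2 - 2q\rho + q^2$. Substituting $\rho = tq$ with $t \in [-\theta, \theta]$ and dividing by $q$, the goal becomes the scalar inequality
\begin{align}
  \sqrt{1 - 2t + \theta^2} \le \frac{1 - t - \theta + \theta^2}{1-\theta}, \qquad t \in [-\theta, \theta].
\end{align}
The right-hand side is positive on this interval. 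At $t = \theta$ both sides equal $1-\theta$. I would finish by squaring. The difference (right-hand side squared minus left-hand side squared) is a convex quadratic in $t$. Its derivative at $t = \theta$ equals $-2\theta(1-\theta)^{-2}(1-\theta)^2 \cdot$ (a nonpositive sign) — i.e.\ it is nonpositive there — so the difference is decreasing up to $t = \theta$. Since it vanishes at $t = \theta$, it is nonnegative on the whole interval. A spot check at $t = -\theta$ reduces to $1 - \theta^2 \le 1 + \theta^2$, which holds.

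\emph{Main obstacle.} The main step is this last scalar verification: the bound is tight (equality at $\rho = \theta q$), so no slack can be wasted. In particular, the relation $q - \rho \le B$ must be kept in its exact form rather than relaxed to $(1-\theta)q \le B$.
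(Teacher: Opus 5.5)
Your proof is correct and follows the paper's route. The first bound comes from pairing the residual with $s$, and your target $\norm*{g_k + H_k s} + \theta q \le (q-\rho)/(1-\theta) = -\inner*{g_k + H_k s}{s}/\bigl((1-\theta)\norm*{s}\bigr)$ is exactly the intermediate inequality the paper obtains by squaring the hypothesis and adding $2ab \le a^2 + b^2$. Your scalar check is that same AM--GM step, $\sqrt{x} \le (x + c^2)/(2c)$ with $x = 1 - 2t + \theta^2$ and $c = 1-\theta$: the squared difference equals $(t-\theta)^2/(1-\theta)^2$, so its derivative at $t=\theta$ is exactly $0$. The derivative expression you wrote is garbled, but the conclusion you draw from it is right.
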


\begin{proof}
  The case $s = \0$ is trivial, so we assume that $s \neq \0$.
  We first derive upper and lower bounds on $\inner*{g_k + H_k s}{s}$.
  We have
  \begin{align}
    \inner*{g_k + H_k s}{s}
    &=
    \inner[\big]{g_k + H_k s + \sigma \norm*{s} s}{s}
    - \sigma \norm*{s}^3\\
    &\leq
    \norm[\big]{g_k + H_k s + \sigma \norm*{s} s} \norm*{s}
    - \sigma \norm*{s}^3
    \leq
    - (1 - \theta) \sigma \norm*{s}^3,
  \end{align}
  where the last inequality uses \cref{eq:subproblem_theta_fo_condition}.
  We also have
  \begin{align}
    \inner*{g_k + H_k s}{s}
    &=
    \inner*{g_k}{s} + \inner*{H_k s}{s}
    \geq
    - \norm*{g_k} \norm*{s} - \mu_k \norm*{s}^2
    \label{eq:gk_Hk_s_inner_lower_bound}
  \end{align}
  where the inequality follows from Cauchy--Schwarz and the definition of $\mu_k$ in \cref{eq:def_g_H_mu}.
  Combining these two bounds and dividing by $\norm*{s}$ proves \cref{eq:sigma_s2_lower_bound}.
  Using \cref{eq:sigma_s2_lower_bound} gives
  \begin{align}
    \norm*{g_k} + \theta \sigma \norm*{s}^2
    \leq
    \norm*{g_k} + \frac{\theta}{1 - \theta} \prn[\Big]{\norm*{g_k} + \mu_k \norm*{s}}
    =
    \frac{\norm*{g_k} + \theta \mu_k \norm*{s}}{1 - \theta}.
    \label{eq:gk_theta_sigma_s2_bound}
  \end{align}

  Next, squaring \cref{eq:subproblem_theta_fo_condition} gives
  \begin{align}
    \norm*{g_k + H_k s}^2
    + 2 \sigma \norm*{s} \inner*{g_k + H_k s}{s}
    + \sigma^2 \norm*{s}^4
    \leq
    \theta^2 \sigma^2 \norm*{s}^4.
  \end{align}
  Since $2 ab \leq a^2 + b^2$ for $a, b \in \R$, we have
  \begin{align}
    2 \prn*{1 - \theta} \sigma \norm*{s}^2 \norm*{g_k + H_k s}
    \leq
    \prn*{1 - \theta}^2 \sigma^2 \norm*{s}^4
    + \norm*{g_k + H_k s}^2.
  \end{align}
  Adding these two inequalities and rearranging terms gives
  \begin{align}
    2 \prn*{1 - \theta} \sigma \norm*{s}^2
    \prn*{
      \norm*{g_k + H_k s} + \theta \sigma \norm*{s}^2
    }
    &\leq
    - 2 \sigma \norm*{s} \inner*{g_k + H_k s}{s}.
  \end{align}
  Dividing both sides by $2 \prn*{1 - \theta} \sigma \norm*{s}^2$ and using \cref{eq:gk_Hk_s_inner_lower_bound} yields
  \begin{align}
    \norm*{g_k + H_k s} + \theta \sigma \norm*{s}^2
    &\leq
    - \frac{\inner*{g_k + H_k s}{s}}{\prn*{1 - \theta} \norm*{s}}
    \leq
    \frac{\norm*{g_k} + \mu_k \norm*{s}}{1 - \theta}.
    \label{eq:gk_Hk_s_theta_bound}
  \end{align}
  Combining \cref{eq:gk_theta_sigma_s2_bound,eq:gk_Hk_s_theta_bound} with the definition of $G_k(s)$ and $\theta < 1$ proves \cref{eq:Gk_s_theta_bound}.
\end{proof}

The following lemma shows that, once $\sigma$ is sufficiently large, the approximate first-order condition for the subproblem yields an upper bound on $\tilde L(x_k, s)$.
This quantity appears in \cref{thm:taylor_equivalence}, and controlling it is the key step in proving \cref{lem:sigma_acceptance_condition}.

\begin{lemma}
  \label{lem:M0_M1_Gk_bound}
  Suppose that \cref{asm:our_hessian_gen_smoothness} holds and that $\sigma > 0$ satisfies \cref{eq:sigma_acceptance_bound}.
  For every $s \in \R^d$ satisfying \cref{eq:subproblem_fo_condition}, we have
  \begin{align}
    \tilde L(x_k, s)
    \leq
    \sigma \prn*{
      1 - \frac{L_1}{12} \norm*{s}^2
    }.
  \end{align}
\end{lemma}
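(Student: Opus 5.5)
The plan is to combine \cref{lem:Gk_s_bound} with $\theta = \frac{1}{12}$, which is exactly \cref{eq:subproblem_fo_condition}, and the assumed lower bound \cref{eq:sigma_acceptance_bound}. The only obstruction is a term $L_1 \mu_k \norm*{s}$. We remove it by first showing that $\sqrt{L_1}\norm*{s} \leq 1$.

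\emph{Step 1: reduce the claim to a scalar inequality.} Since $\tilde L(x_k, s) = L_0 + L_1 G_k(s)$, the claim is equivalent to
\begin{align}
  L_0 + L_1 \prn*{G_k(s) + \tfrac{1}{12}\sigma \norm*{s}^2} \leq \sigma.
\end{align}
By \cref{eq:Gk_s_theta_bound} with $\theta = \frac{1}{12}$, the left-hand side is at most
\begin{align}
  L_0 + \tfrac{12}{11}\prn*{L_1 \norm*{g_k} + L_1 \mu_k \norm*{s}}.
\end{align}
Write $a \coloneqq \sqrt{L_1}\norm*{s}$. Then $L_1 \mu_k \norm*{s} = \sqrt{L_1}\mu_k a$. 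Comparing with \cref{eq:sigma_acceptance_bound}, it therefore suffices to show $a \leq 1$.

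\emph{Step 2: show $a \leq 1$ (the main obstacle).} This is the step that does not follow directly from \cref{lem:Gk_s_bound}, so I would argue by contradiction. If $L_1 = 0$ or $s = \0$, then $a = 0$ and there is nothing to prove. Otherwise, suppose $a > 1$. Multiply \cref{eq:sigma_s2_lower_bound} (with $\theta = \frac{1}{12}$) by $L_1$ to get
\begin{align}
  \tfrac{11}{12}\sigma a^2
  \leq
  L_1 \norm*{g_k} + \sqrt{L_1}\mu_k a
  \leq
  a \prn*{L_1 \norm*{g_k} + \sqrt{L_1}\mu_k},
\end{align}
where the second inequality uses $a > 1$. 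By \cref{eq:sigma_acceptance_bound}, $L_1 \norm*{g_k} + \sqrt{L_1}\mu_k \leq \frac{11}{12}(\sigma - L_0) < \frac{11}{12}\sigma$, because $L_0 > 0$ under \cref{asm:main}. Hence $\frac{11}{12}\sigma a^2 < \frac{11}{12}\sigma a$, so $a < 1$, which is a contradiction. Therefore $a \leq 1$.

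\emph{Step 3: conclude.} With $a \leq 1$, we have $L_1 \mu_k \norm*{s} \leq \sqrt{L_1}\mu_k$. Substituting this into the bound from Step 1 and applying \cref{eq:sigma_acceptance_bound} gives
\begin{align}
  L_0 + L_1 G_k(s) + \tfrac{L_1}{12}\sigma\norm*{s}^2
  \leq
  L_0 + \tfrac{12}{11}\prn*{L_1\norm*{g_k} + \sqrt{L_1}\mu_k}
  \leq
  \sigma.
\end{align}
Rearranging yields $\tilde L(x_k, s) \leq \sigma\prn*{1 - \frac{L_1}{12}\norm*{s}^2}$, as claimed. If $L_0 = 0$ were allowed, Step 2 would still give $a \leq 1$, which is all that Step 3 needs.
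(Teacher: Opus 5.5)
Your proposal is correct and follows essentially the same route as the paper: apply \cref{lem:Gk_s_bound} with $\theta = \frac{1}{12}$, rule out $\sqrt{L_1}\norm*{s} > 1$ by contradiction using \cref{eq:sigma_s2_lower_bound} and \cref{eq:sigma_acceptance_bound}, then multiply \cref{eq:Gk_s_theta_bound} by $L_1$ and rearrange. Your use of $L_0 > 0$ to get a strict inequality in Step 2 is harmless but unnecessary; the paper uses the non-strict bound $\frac{11}{12}\sigma\alpha^2 \leq \frac{11}{12}\sigma\alpha$, which already yields $\alpha \leq 1$, as you note at the end.
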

\begin{proof}
  Since $s \in \R^d$ satisfies \cref{eq:subproblem_fo_condition}, we can apply \cref{lem:Gk_s_bound} with $\theta = \frac{1}{12}$ to obtain
  \begin{align}
    \frac{11}{12} \sigma \norm*{s}^2
    &\leq
    \norm*{g_k} + \mu_k \norm*{s},\\
    G_k(s) + \frac{\sigma}{12} \norm*{s}^2
    &\leq
    \frac{12}{11} \prn[\Big]{ \norm*{g_k} + \mu_k \norm*{s} }.
    \label{eq:Gk_s_theta_1_12_bound}
  \end{align}
  Let $\alpha \coloneqq \sqrt{L_1} \norm*{s}$. Suppose, for a contradiction, that $\alpha > 1$.
  Multiplying the first inequality by $L_1$ gives
  \begin{align}
    \frac{11}{12} \sigma \alpha^2
    &\leq
    L_1 \norm*{g_k} + \sqrt{L_1} \mu_k \alpha
    \leq
    \prn*{L_1 \norm*{g_k} + \sqrt{L_1} \mu_k} \alpha
    \leq
    \frac{11}{12} \sigma \alpha,
  \end{align}
  where the second inequality follows from $\alpha > 1$, and the last inequality uses \cref{eq:sigma_acceptance_bound}.
  This implies $\alpha \leq 1$, contradicting $\alpha > 1$.

  Using $\alpha = \sqrt{L_1}\norm*{s} \leq 1$ and \cref{eq:sigma_acceptance_bound}, we obtain
  \begin{align}
    L_1 \prn[\Big]{ \norm*{g_k} + \mu_k \norm*{s} }
    \leq
    L_1 \norm*{g_k} + \sqrt{L_1} \mu_k
    \leq
    \frac{11}{12} \prn*{\sigma - L_0}.
  \end{align}
  Multiplying \cref{eq:Gk_s_theta_1_12_bound} by $L_1$ and applying the above bound yields
  \begin{align}
    L_1 \prn*{ G_k(s) + \frac{\sigma}{12} \norm*{s}^2 }
    \leq
    \frac{12}{11} L_1 \prn[\Big]{ \norm*{g_k} + \mu_k \norm*{s} }
    \leq
    \sigma - L_0.
  \end{align}
  Rearranging terms completes the proof.
\end{proof}

The inequalities in \cref{thm:taylor_equivalence} involve not only $\tilde L(x_k, s)$ but also the hyperbolic functions.
The following auxiliary lemma provides the bounds needed to control these terms.

\begin{lemma}
  \label{lem:hyperbolic_bounds}
  For any $t \geq 0$, we have
  \begin{align}
    \prn*{1 - \frac{t^2}{12}} \frac{\cosh t - 1}{t^2} \leq \frac{1}{2}, \qquad
    \prn*{1 - \frac{t^2}{12}} \frac{\sinh t - t}{t^3} \leq \frac{1}{6}.
  \end{align}
\end{lemma}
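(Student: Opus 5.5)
Both inequalities follow from comparing power-series coefficients. No case analysis and no calculus beyond the Taylor series of $\cosh$ and $\sinh$ is needed. At $t = 0$ the quotients are understood via \cref{eq:hyperbolic_limits}, and the series below give exactly these values, so it suffices to work with the series for all $t \ge 0$.

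For the first inequality, expand
\begin{align}
  \frac{\cosh t - 1}{t^2} = \sum_{n \ge 0} a_n t^{2n},
  \qquad a_n \coloneqq \frac{1}{(2n+2)!}.
\end{align}
This series converges for every $t$. Multiplying by $1 - t^2/12$ gives the series $\sum_{n \ge 0} b_n t^{2n}$ with
\begin{align}
  b_0 = a_0 = \tfrac{1}{2},
  \qquad
  b_n = a_n - \tfrac{1}{12} a_{n-1} \quad (n \ge 1).
\end{align}
Now $b_n \le 0$ holds if and only if $(2n+1)(2n+2) \ge 12$. This is true for all $n \ge 1$, with equality at $n = 1$. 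Hence the product series is at most its constant term $1/2$ for every $t \ge 0$, because every higher term is a nonpositive coefficient times $t^{2n} \ge 0$.

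The second inequality follows in the same way. Expand
\begin{align}
  \frac{\sinh t - t}{t^3} = \sum_{n \ge 0} c_n t^{2n},
  \qquad c_n \coloneqq \frac{1}{(2n+3)!},
\end{align}
so that the constant term is $1/6$. The coefficients of the product with $1 - t^2/12$ are $c_n - c_{n-1}/12$ for $n \ge 1$. These are nonpositive if and only if $(2n+2)(2n+3) \ge 12$, which already holds at $n = 1$ since $4 \cdot 5 = 20$. Therefore the product is at most $1/6$.

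The only step requiring care is checking the borderline coefficient at $n = 1$ in the first inequality, where equality occurs. This shows the constant $\tfrac{1}{12}$ is exactly what the $\cosh$ bound can tolerate. Everything else is routine, and multiplying the convergent power series by the polynomial $1 - t^2/12$ termwise is justified by absolute convergence.
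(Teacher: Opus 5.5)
Your proposal is correct and is essentially the paper's proof. Both expand $\cosh t - 1$ and $\sinh t - t$ in power series, multiply by $1 - \frac{t^2}{12}$, and check that every coefficient after the constant term is nonpositive; these coefficients are $\frac{1}{(2n+2)!} - \frac{1}{12(2n)!}$ and $\frac{1}{(2n+3)!} - \frac{1}{12(2n+1)!}$. Your extra remark that the $n=1$ coefficient vanishes in the first inequality, so $\frac{1}{12}$ is the smallest admissible constant, is a correct observation that the paper does not make explicitly.
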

\begin{proof}
  The power series expansions give
  \begin{align}
    \cosh t - 1
    = \sum_{m=1}^{\infty} \frac{t^{2 m}}{(2 m)!},\qquad
    \sinh t - t
    = \sum_{m=1}^{\infty} \frac{t^{2 m+1}}{\prn*{2 m + 1}!},
  \end{align}
  and hence
  \begin{align}
    \prn*{1 - \frac{t^2}{12}} \frac{\cosh t - 1}{t^2}
    &=
    \frac{1}{2}
    + \sum_{m=1}^{\infty} \frac{1}{(2 m)!} \prn*{
      \frac{1}{\prn*{2 m + 1}\prn*{2 m + 2}} - \frac{1}{12}
    } t^{2m},\\
    \prn*{1 - \frac{t^2}{12}} \frac{\sinh t - t}{t^3}
    &=
    \frac{1}{6}
    + \sum_{m=1}^{\infty} \frac{1}{\prn*{2 m + 1}!} \prn*{
      \frac{1}{\prn*{2 m + 2}\prn*{2 m + 3}} - \frac{1}{12}
    } t^{2m}.
  \end{align}
  Since all coefficients in the sums are nonpositive, the desired bounds follow.
\end{proof}

We are now ready to prove \cref{lem:sigma_acceptance_condition}.
The proof combines \cref{lem:M0_M1_Gk_bound,lem:hyperbolic_bounds}.

\begin{proof}[Proof of \cref{lem:sigma_acceptance_condition}]
  If $s = \0$, then \cref{eq:accept_decrease_condition,eq:accept_grad_condition} are immediate.
  We assume $s \neq \0$ and set $\alpha \coloneqq \sqrt{L_1} \norm*{s}$.
  Combining \cref{lem:M0_M1_Gk_bound,lem:hyperbolic_bounds} yields
  \begin{align}
    \tilde L(x_k, s) \frac{\cosh \alpha - 1}{\alpha^2}
    &\leq
    \sigma \prn*{1 - \frac{\alpha^2}{12}} \frac{\cosh \alpha - 1}{\alpha^2}
    \leq
    \frac{\sigma}{2},\\
    \tilde L(x_k, s) \frac{\sinh \alpha - \alpha}{\alpha^3}
    &\leq
    \sigma \prn*{1 - \frac{\alpha^2}{12}} \frac{\sinh \alpha - \alpha}{\alpha^3}
    \leq
    \frac{\sigma}{6}.
  \end{align}
  Plugging these bounds into \cref{eq:grad_taylor_G_bound,eq:function_taylor_bound} gives
  \begin{align}
    \norm*{\nabla f(x_k + s) - g_k - H_k s}
    &\leq
    \frac{\sigma}{2} \norm*{s}^2,
    \label{eq:accept_grad_remainder_bound}\\
    f(x_k + s) - f(x_k) - \inner*{g_k}{s} - \frac{1}{2} \inner*{H_k s}{s}
    &\leq
    \frac{\sigma}{6} \norm*{s}^3.
    \label{eq:accept_taylor_remainder_bound}
  \end{align}
  Using the triangle inequality, we have
  \begin{align}
    \norm*{\nabla f(x_k + s)}
    \leq
    \norm*{\nabla f(x_k + s) - g_k - H_k s}
    + \norm[\big]{g_k + H_k s + \sigma \norm*{s} s}
    + \sigma \norm*{s}^2.
  \end{align}
  Plugging \cref{eq:accept_grad_remainder_bound,eq:subproblem_fo_condition} into this bound proves \cref{eq:accept_grad_condition}.
  We also have
  \begin{alignat}{2}
    \inner*{g_k}{s} + \frac{1}{2} \inner*{H_k s}{s}
    &=
    \inner[\big]{g_k + H_k s + \sigma \norm*{s} s}{s}
    - \frac{1}{2} \inner*{H_k s}{s}
    - \sigma \norm*{s}^3\\
    &\leq
    \frac{\sigma}{12} \norm*{s}^3
    - \frac{1}{2} \inner*{H_k s}{s}
    - \sigma \norm*{s}^3
    &\quad&\by{\cref{eq:subproblem_fo_condition}}\\
    &\leq
    \frac{\sigma}{12} \norm*{s}^3
    + \frac{7}{12} \sigma \norm*{s}^3
    - \sigma \norm*{s}^3
    =
    - \frac{\sigma}{3} \norm*{s}^3.
    &\quad&\by{\cref{eq:subproblem_so_condition}}
  \end{alignat}
  Adding this bound to \cref{eq:accept_taylor_remainder_bound} proves \cref{eq:accept_decrease_condition}.
\end{proof}

\section{Complexity analysis}
This section analyzes the oracle complexity of \cref{alg:cubic_newton_backtracking} under \cref{asm:main}.

We first note several inequalities that will be used in the analysis.
The backtracking conditions~\cref{eq:accept_decrease_condition,eq:accept_grad_condition} ensure that
\begin{align}
  \frac{\sigma_k}{6} \norm*{s_k}^3
  &\leq
  f(x_k) - f(x_{k+1}),
  \label{eq:func_decrease_sk_bound}
  \\
  \norm*{g_{k+1}} &\leq \frac{19}{12} \sigma_k \norm*{s_k}^2.
  \label{eq:g_k_plus_1_sigmak_sk_bound}
\end{align}
By the definition of $\mu_k$ in \cref{eq:def_g_H_mu}, condition~\cref{eq:subproblem_so_condition} implies
\begin{align}
  \mu_k
  &\leq
  \frac{7}{6} \sigma_k \norm*{s_k}.
  \label{eq:mu_k_sigmak_sk_bound}
\end{align}
Summing \cref{eq:func_decrease_sk_bound} over $k$ gives
\begin{align}
  \sum_{i=0}^{k-1} \sigma_i \norm*{s_i}^3 \leq 6 \prn*{f(x_0) - f(x_k)} \leq 6 \Delta.
  \label{eq:sigmai_si_cubic_sum_bound}
\end{align}

\subsection{Upper bounds on the regularization parameter}
To derive the complexity bound from the inequalities above, we also need an upper bound on the accepted regularization parameter $\sigma_k$.
When $L_1 = 0$, this step is straightforward; \cref{lem:sigma_acceptance_condition} immediately implies $\sigma_k \leq \max \set*{\sigmainit,\, 2 L_0}$.
In the case of primary interest, $L_1 > 0$, the argument is more delicate because the sufficient condition in \cref{lem:sigma_acceptance_condition} is no longer uniform in $k$; it contains the additional $k$-dependent terms $\norm*{g_k}$ and $\mu_k$.
To handle this dependence, we first establish an upper bound on $\sigma_0$ and a recursion for $\sigma_k$.

\begin{lemma}
  Suppose that \cref{asm:our_hessian_gen_smoothness} holds.
  Then we have
  \begin{align}
    \sigma_0
    \leq
    \max \set*{
      \sigmainit,\,
      4 L_0 + 4 L_1 \norm*{g_0} + 24 L_1^{3/2} \sigma_0 \norm*{s_0}^3
    }
    \label{eq:sigma0_upper_bound}
  \end{align}
  Moreover, for all $k \geq 1$,
  \begin{align}
    \sigma_k
    \leq
    \frac{\sigma_{k-1}}{2} + 4 L_0 + 8 L_1^{3/2} \prn*{
      \sigma_{k-1} \norm*{s_{k-1}}^3
      + 3 \sigma_k \norm*{s_k}^3
    }.
    \label{eq:sigmak_recursion_bound}
  \end{align}
\end{lemma}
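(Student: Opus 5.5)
The plan is a standard backtracking argument combined with \cref{lem:sigma_acceptance_condition}.

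Fix an iteration $k$. Let $\hat\sigma_k$ be the first value of $\sigma$ tried at iteration $k$. This is $\sigmainit$ if $k=0$ and $\sigma_{k-1}/2$ if $k \geq 1$. Either $\sigma_k = \hat\sigma_k$, or $\sigma_k > \hat\sigma_k$. In the second case the value $\sigma_k/2$ was tried and rejected. By \cref{lem:sigma_acceptance_condition}, the rejected value must violate \cref{eq:sigma_acceptance_bound}, so
\begin{align}
  \sigma_k < 2L_0 + \tfrac{24}{11}\prn*{L_1\norm*{g_k} + \sqrt{L_1}\mu_k}.
\end{align}
In every case, $\sigma_k \leq \hat\sigma_k + 2L_0 + \tfrac{24}{11}(L_1\norm*{g_k}+\sqrt{L_1}\mu_k)$, and also $\sigma_k \leq \max\{\hat\sigma_k,\ \cdot\}$. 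It remains to bound $L_1\norm*{g_k}$ and $\sqrt{L_1}\mu_k$ by quantities of the form $L_1^{3/2}\sigma\norm{s}^3$, absorbing everything else into a multiple of $L_0$ or a fraction of $\sigma_k$.

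The key trick is Young's inequality against $\sigma_k$ itself. By \cref{eq:mu_k_sigmak_sk_bound},
\begin{align}
  \sqrt{L_1}\mu_k \leq \tfrac{7}{6}\sqrt{L_1}\sigma_k\norm*{s_k} = \tfrac76 \sigma_k^{2/3}\prn*{L_1^{3/2}\sigma_k\norm*{s_k}^3}^{1/3}.
\end{align}
Applying $a^{2/3}b^{1/3}\leq \tfrac23 \eta a + \tfrac13\eta^{-2} b$ with a small $\eta$, the $\sigma_k$ part can be moved to the left-hand side with coefficient, say, $1/4$. This leaves a constant times $L_1^{3/2}\sigma_k\norm{s_k}^3$, which is the source of the "$24$" and "$3\sigma_k\norm{s_k}^3$" terms.

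For $L_1\norm*{g_k}$ the two cases differ. For $k\geq1$, use \cref{eq:g_k_plus_1_sigmak_sk_bound} with index $k-1$:
\begin{align}
  L_1\norm*{g_k}\leq \tfrac{19}{12}L_1\sigma_{k-1}\norm*{s_{k-1}}^2 = \tfrac{19}{12}\sigma_{k-1}^{1/3}\prn*{L_1^{3/2}\sigma_{k-1}\norm*{s_{k-1}}^3}^{2/3}.
\end{align}
Young's inequality then splits this into a small multiple of $\sigma_{k-1}$ plus a multiple of $L_1^{3/2}\sigma_{k-1}\norm{s_{k-1}}^3$. The small multiple of $\sigma_{k-1}$ is where care is needed: the final coefficient of $\sigma_{k-1}$ must stay exactly $1/2$. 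To arrange this, I would not add $\hat\sigma_k$ in the rejection case. In that case $\sigma_k<2(\dots)$ holds outright, so I bound $\sigma_k$ by $\max\{\sigma_{k-1}/2,\ \text{rejection bound}\}$. Inside the rejection bound, the $\sigma_{k-1}$ fraction from Young can be chosen at most $\sigma_{k-1}/4$, and after multiplying by constants the sum still gives coefficient $\leq 1/2$. Replacing $\max$ by a sum then gives \cref{eq:sigmak_recursion_bound}. The constants $4L_0$ and $8$ arise after dividing by the $3/4$ (or $1/2$) left over from absorbing $\sigma_k$, and I would tune $\eta$ to match them.

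For $k=0$, there is no previous step, so $L_1\norm{g_0}$ stays as is. This yields \cref{eq:sigma0_upper_bound} with the $\max$ form: either $\sigma_0=\sigmainit$, or the rejection bound holds, and after absorbing the $\mu_0$ term it gives $\sigma_0\leq 4L_0+4L_1\norm{g_0}+24L_1^{3/2}\sigma_0\norm{s_0}^3$. The main obstacle is this constant bookkeeping: keeping the $\sigma_{k-1}$ coefficient at exactly $1/2$ while absorbing $\sigma_k$. If the constants do not close, I would fall back to using the cruder bound $\sqrt{L_1}\mu_k\leq \tfrac14\sigma_k + c\,L_1^{3/2}\sigma_k\norm{s_k}^3$ directly and accept slightly different constants.
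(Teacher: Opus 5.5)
Your outline matches the paper's proof: split on whether the first trial ($\sigmainit$ or $\sigma_{k-1}/2$) is accepted, apply \cref{lem:sigma_acceptance_condition} to the rejected value $\sigma_k/2$, substitute \cref{eq:g_k_plus_1_sigmak_sk_bound} and \cref{eq:mu_k_sigmak_sk_bound}, use weighted AM--GM, and move the resulting multiple of $\sigma_k$ to the left.

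The gap is in the step you defer, which is the whole content of the statement. The constants are nearly tight, and the choices you suggest do not close. Write $\alpha_j = \sqrt{L_1}\norm{s_j}$, and let $c$ be the coefficient of $\sigma_k$ left on the left-hand side after absorption.

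\begin{itemize}
\item In the rejection case, the $\sigma_{k-1}$-terms become $\frac{38}{11c}\sigma_{k-1}\alpha_{k-1}^2$. Matching them termwise with $\frac12\sigma_{k-1} + 8\sigma_{k-1}\alpha_{k-1}^3$ requires $\frac{38}{11c}t^2 \le \frac12 + 8t^3$ for all $t \ge 0$. This holds if and only if $\frac{38}{11c} \le 6$, i.e., $c \ge \frac{19}{33}$.
\item On the other side, writing $\frac{28}{11}\sigma_k\alpha_k \le (1-c)\sigma_k + B\sigma_k\alpha_k^3$ forces $B \ge \frac{4}{27}(\tfrac{28}{11})^3(1-c)^{-2}$. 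Then $B/c \le 24$ forces $c$ to be at most about $0.58$.
\end{itemize}

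So a leftover of $\frac12$ breaks the $\frac12$/$8$ pair, and a leftover of $\frac34$ breaks the $24$. What works, and is the paper's choice, is $\eta = \frac14$ in both AM--GM steps, i.e., $t \le \frac16 + \frac{16}{3}t^3$ and $t^2 \le \frac43 t^3 + \frac{1}{12}$. This gives:
\begin{itemize}
\item $c = \frac{19}{33}$;
\item coefficient exactly $\frac12$ on $\sigma_{k-1}$ and exactly $8$ on $\sigma_{k-1}\alpha_{k-1}^3$;
\item the remaining constants $\frac{66}{19} \le 4$, $\frac{72}{19} \le 4$ (for $k=0$), and $\frac{448}{19} \le 24$.
\end{itemize}

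Two further points. First, do not finish by replacing the $\max$ with a sum. Any admissible choice leaves the rejection-case bound carrying close to $\frac12\sigma_{k-1}$, so $\max\{a,b\} \le a+b$ would put about $\sigma_{k-1}$ on the right. Instead, note that the rejection-case bound consists of at most $\frac12\sigma_{k-1}$ plus nonnegative terms. It therefore also dominates the accepted branch $\sigma_k = \sigma_{k-1}/2$, and no sum is needed. Second, your fallback of accepting slightly different constants would prove a different inequality, not the one stated.
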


\begin{proof}
  The weighted AM--GM inequality $a^{2/3} b^{1/3} \leq \frac{2}{3} a + \frac{1}{3} b$ gives, for all $t \geq 0$,
  \begin{align}
    t
    &=
    \prn*{\frac{1}{4}}^{2/3} \prn*{16 t^3}^{1/3}
    \leq
    \frac{1}{6} + \frac{16}{3} t^3,
    \label{eq:t_young_linear_bound}\\
    t^2
    &=
    \prn*{2 t^3}^{2/3} \prn*{\frac{1}{4}}^{1/3}
    \leq
    \frac{4}{3} t^3 + \frac{1}{12}.
    \label{eq:t_square_young_bound}
  \end{align}
  Let $\alpha_k \coloneqq \sqrt{L_1} \norm*{s_k}$ throughout this proof.

  We first prove \cref{eq:sigma0_upper_bound}.
  Since the first trial value of $\sigma$ in iteration $0$ is $\sigmainit$, \cref{lem:sigma_acceptance_condition} implies that $\sigma_0 = \sigmainit$ or
  \begin{align}
    \sigma_0
    &\leq
    2 L_0 + \frac{24}{11} \prn*{L_1 \norm*{g_0} + \sqrt{L_1} \mu_0}.
    \label{eq:sigma0_backtracking_bound}
  \end{align}
  The case $\sigma_0 = \sigmainit$ is immediate, so assume now that \cref{eq:sigma0_backtracking_bound} holds.
  Plugging \cref{eq:mu_k_sigmak_sk_bound} into \cref{eq:sigma0_backtracking_bound} gives
  \begin{align}
    \sigma_0
    &\leq
    2 L_0 + \frac{24}{11} L_1 \norm*{g_0} + \frac{28}{11} \sigma_0 \alpha_0
    \leq
    2 L_0 + \frac{24}{11} L_1 \norm*{g_0} + \frac{28}{11} \sigma_0 \prn*{
      \frac{1}{6} + \frac{16}{3} \alpha_0^3
    },
  \end{align}
  where the second inequality uses \cref{eq:t_young_linear_bound} with $t=\alpha_0$.
  Collecting the $\sigma_0$ terms on the left and simplifying yields
  \begin{align}
    \sigma_0
    \leq
    \frac{66}{19} L_0 + \frac{72}{19} L_1 \norm*{g_0} + \frac{448}{19} \sigma_0 \alpha_0^3
    &\leq
    4 L_0 + 4 L_1 \norm*{g_0} + 24 \sigma_0 \alpha_0^3,
  \end{align}
  where we have used $\frac{66}{19} \leq 4$, $\frac{72}{19} \leq 4$, and $\frac{448}{19} \leq 24$.
  Substituting $\alpha_0 = \sqrt{L_1} \norm*{s_0}$ proves \cref{eq:sigma0_upper_bound}.

  We next prove \cref{eq:sigmak_recursion_bound} in a similar manner.
  Let $k \geq 1$.
  Since the first trial value of $\sigma$ in iteration $k$ is $\sigma_{k-1} / 2$, \cref{lem:sigma_acceptance_condition} implies that $\sigma_k = \sigma_{k-1} / 2$ or
  \begin{align}
    \sigma_k
    \leq
    2 L_0 + \frac{24}{11} \prn*{L_1 \norm*{g_k} + \sqrt{L_1} \mu_k}.
    \label{eq:sigmak_backtracking_bound}
  \end{align}
  The case $\sigma_k = \sigma_{k-1} / 2$ is immediate, so assume now that \cref{eq:sigmak_backtracking_bound} holds.
  Plugging \cref{eq:g_k_plus_1_sigmak_sk_bound,eq:mu_k_sigmak_sk_bound} into \cref{eq:sigmak_backtracking_bound} gives
  \begin{alignat}{2}
    \sigma_k
    &\leq
    2 L_0 + \frac{24}{11} \prn*{
      \frac{19}{12} \sigma_{k-1} \alpha_{k-1}^2
      + \frac{7}{6} \sigma_k \alpha_k
    }\\
    &\leq
    2 L_0 + \frac{24}{11} \prn*{
      \frac{19}{12} \sigma_{k-1} \prn*{\frac{1}{12} + \frac{4}{3} \alpha_{k-1}^3}
      + \frac{7}{6} \sigma_k \prn*{\frac{1}{6} + \frac{16}{3} \alpha_k^3}
    }\\
    &=
    2 L_0 + \frac{19}{66} \sigma_{k-1} + \frac{8 \cdot 19}{33} \sigma_{k-1} \alpha_{k-1}^3 + \frac{14}{33} \sigma_k + \frac{448}{33} \sigma_k \alpha_k^3,
  \end{alignat}
  where the second inequality uses \cref{eq:t_square_young_bound,eq:t_young_linear_bound}.
  Collecting the $\sigma_k$ terms on the left and simplifying yields
  \begin{align}
    \sigma_k
    \leq
    \frac{\sigma_{k-1}}{2} + \frac{66}{19} L_0 + 8 \sigma_{k-1} \alpha_{k-1}^3 + \frac{448}{19} \sigma_k \alpha_k^3.
  \end{align}
  Using $\frac{66}{19} \leq 4$ and $\frac{448}{19} \leq 24$ and then substituting $\alpha_k = \sqrt{L_1} \norm*{s_k}$ proves \cref{eq:sigmak_recursion_bound}.
\end{proof}

Notice that the upper bound on $\sigma_0$ in \cref{eq:sigma0_upper_bound} contains terms, such as $\norm*{g_0}$, that did not appear in the case $L_1 = 0$.
On the other hand, the right-hand side of \cref{eq:sigmak_recursion_bound} contains the factor $\frac{1}{2}$ multiplying $\sigma_{i-1}$.
Thus, iterating this inequality shows that the influence of $\norm*{g_0}$ and $\sigmainit$ on $\sigma_i$ decays geometrically in $i$.
More precisely, define the index $k_0 \in \N$ by
\begin{align}
  k_0
  \coloneqq
  \max \set[\bigg]{1, \ceil[\bigg]{\log_2 \prn[\bigg]{\frac{\bar{\sigma}_{\mathrm{init}}}{L_0 + L_1^{3/2} \Delta}}}},
  \quad\text{where}\quad
  \bar{\sigma}_{\mathrm{init}}
  \coloneqq
  \max \set*{\sigmainit, 4 L_1 \norm*{g_0}}.
  \label{eq:def_initial_phase_k0_sigma_init_bar}
\end{align}
For iterations $k \geq k_0$, the influence of $\norm*{g_0}$ and $\sigmainit$ becomes negligible in the subsequent complexity analysis, as the following lemma shows.
This definition immediately gives
\begin{align}
  2^{-k_0} \bar{\sigma}_{\mathrm{init}} \leq L_0 + L_1^{3/2} \Delta.
  \label{eq:initial_phase_k0_condition}
\end{align}

\begin{lemma}
  \label{lem:sigma_post_sum_bound}
  Suppose that \cref{asm:main} holds and let $k_0$ be defined by \cref{eq:def_initial_phase_k0_sigma_init_bar}.
  For all $i \geq k_0$, we have
  \begin{align}
    \sigma_i \leq 9 L_0 + 145 L_1^{3/2} \Delta.
    \label{eq:sigmai_post_upper_bound}
  \end{align}
  Moreover, for all $k \geq k_0$,
  \begin{align}
    \sum_{i = k_0}^{k} \sigma_i \leq \prn[\big]{8 \prn*{k - k_0} + 10} L_0 + 386 L_1^{3/2} \Delta.
    \label{eq:sigmai_post_sum_bound}
  \end{align}
\end{lemma}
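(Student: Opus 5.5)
The plan is to unroll the recursion \cref{eq:sigmak_recursion_bound} down to $\sigma_0$ and bound $\sigma_0$ by \cref{eq:sigma0_upper_bound}. Write $a_j \coloneqq \sigma_j \norm*{s_j}^3$ for brevity. By \cref{eq:sigmai_si_cubic_sum_bound}, $\sum_j a_j \leq 6\Delta$, and in particular each $a_j \leq 6\Delta$.

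\textbf{Step 1: bound $\sigma_0$.} From \cref{eq:sigma0_upper_bound} and the definition of $\bar\sigma_{\mathrm{init}}$,
\begin{align}
  \sigma_0 \leq \bar\sigma_{\mathrm{init}} + 4L_0 + 24L_1^{3/2} a_0 .
\end{align}
Here the term $4L_1\norm*{g_0}$ is absorbed into $\bar\sigma_{\mathrm{init}}$.

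\textbf{Step 2: unroll the recursion.} Iterating \cref{eq:sigmak_recursion_bound} from $0$ to $i$ gives
\begin{align}
  \sigma_i \leq 2^{-i}\sigma_0 + \sum_{j=1}^{i} 2^{-(i-j)}\prn*{4L_0 + 8L_1^{3/2}\prn*{a_{j-1} + 3a_j}} .
\end{align}
Substituting Step 1 and collecting coefficients:
\begin{itemize}
  \item The $L_0$ coefficient is $4\cdot 2^{-i} + 4\sum_{m=0}^{i-1}2^{-m} \leq 8$.
  \item The coefficient of $L_1^{3/2}a_i$ is $24$.
  \item For $j<i$, the coefficient of $L_1^{3/2}a_j$ is $24\cdot 2^{-(i-j)} + 8\cdot 2^{-(i-j-1)} = 40\cdot 2^{-(i-j)} \leq 20$. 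This includes $j=0$, which gets $24\cdot 2^{-i}$ from $\sigma_0$ plus $8\cdot 2^{-(i-1)}$ from the recursion.
\end{itemize}
Hence every $a_j$ carries weight at most $24$, and
\begin{align}
  \sigma_i \leq 2^{-i}\bar\sigma_{\mathrm{init}} + 8L_0 + 24L_1^{3/2}\sum_j a_j \leq 2^{-i}\bar\sigma_{\mathrm{init}} + 8L_0 + 144 L_1^{3/2}\Delta .
\end{align}
For $i \geq k_0$, \cref{eq:initial_phase_k0_condition} gives $2^{-i}\bar\sigma_{\mathrm{init}} \leq L_0 + L_1^{3/2}\Delta$, which yields \cref{eq:sigmai_post_upper_bound}.

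\textbf{Step 3: sum over $i$.} I will sum the unrolled inequality over $i=k_0,\dots,k$ rather than sum the recursion directly. Summing the recursion directly would double-count $\sigma_{k_0}$ and lose the constants. After exchanging the order of summation, there are three contributions:
\begin{itemize}
  \item The initial terms give $\sum_{i\geq k_0}2^{-i}\bar\sigma_{\mathrm{init}} \leq 2\cdot 2^{-k_0}\bar\sigma_{\mathrm{init}} \leq 2(L_0 + L_1^{3/2}\Delta)$.
  \item Each $i$ contributes at most $8L_0$, for a total of $8(k-k_0+1)L_0$.
  \item A fixed $a_j$ has total weight at most $24 + 40\sum_{m\geq1}2^{-m} = 64$ over all $i\geq j$. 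With $\sum_j a_j \leq 6\Delta$, this gives $384L_1^{3/2}\Delta$.
\end{itemize}
Adding these gives $(8(k-k_0)+10)L_0 + 386L_1^{3/2}\Delta$, which is \cref{eq:sigmai_post_sum_bound}.

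The main obstacle is the constant bookkeeping in Step 3. The coefficients of $a_j$ must be tracked carefully after the exchange of summation, in particular the boundary term $j=0$ coming from the bound on $\sigma_0$ and the $3a_j$ versus $a_{j-1}$ asymmetry in the recursion. This is needed to obtain $64$ per $a_j$, rather than a cruder bound that would miss the stated constant $386$.
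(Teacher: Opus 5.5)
Your proof is correct and is essentially the paper's argument: the paper packages the same bookkeeping through the shifted quantity $\Phi_i = \sigma_i - 8L_0 - 24L_1^{3/2}\sigma_i\|s_i\|^3$, which satisfies $\Phi_0 \le \bar\sigma_{\mathrm{init}}$ and $\Phi_i \le \Phi_{i-1}/2 + 20L_1^{3/2}\sigma_{i-1}\|s_{i-1}\|^3$. Unrolling this recursion reproduces your weights exactly: $24$ on $a_i$ and $20\cdot 2^{j-i+1} = 40\cdot 2^{-(i-j)}$ on $a_j$ for $j<i$. The paper then obtains the same split $2 + 240 + 144 = 386$ that your per-$a_j$ total of $64$ gives.
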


\begin{proof}
  To simplify notation, let
  \begin{align}
    \Phi_i \coloneqq \sigma_i - 8 L_0 - 24 L_1^{3/2} \sigma_i \norm*{s_i}^3.
    \label{eq:def_Phi_i}
  \end{align}
  Applying \cref{eq:sigma0_upper_bound} to the definition of $\Phi_0$ gives $\Phi_0 \leq \max \set*{\sigmainit, 4 L_1 \norm*{g_0} - 4 L_0} \leq \bar{\sigma}_{\mathrm{init}}$, where $\bar{\sigma}_{\mathrm{init}}$ is defined in \cref{eq:def_initial_phase_k0_sigma_init_bar}.
  Moreover, \cref{eq:sigmak_recursion_bound} can be rewritten as
  \begin{align}
    \Phi_i
    \leq
    \frac{\Phi_{i-1}}{2} + 20 L_1^{3/2} \sigma_{i-1} \norm*{s_{i-1}}^3.
  \end{align}
  Solving this recursion and using $\Phi_0 \leq \bar{\sigma}_{\mathrm{init}}$ gives
  \begin{align}
    \Phi_i
    \leq
    2^{-i} \bar{\sigma}_{\mathrm{init}}
    + 20 L_1^{3/2} \sum_{j=0}^{i-1} 2^{j-i+1} \sigma_j \norm*{s_j}^3.
    \label{eq:Phi_i_unrolled_bound}
  \end{align}
  Plugging \cref{eq:def_Phi_i} into this bound yields
  \begin{align}
    \sigma_i
    &\leq
    2^{-i} \bar{\sigma}_{\mathrm{init}}
    + 8 L_0
    + 24 L_1^{3/2} \sigma_i \norm*{s_i}^3
    + 20 L_1^{3/2} \sum_{j=0}^{i-1} 2^{j-i+1}  \sigma_j \norm*{s_j}^3\\
    &\leq
    2^{-k_0} \bar{\sigma}_{\mathrm{init}}
    + 8 L_0
    + 24 L_1^{3/2} \sum_{j=0}^i \sigma_j \norm*{s_j}^3,
  \end{align}
  where, for the second inequality, we have used $i \geq k_0$ and $2^{j-i+1} \leq 1$ for $0 \leq j \leq i-1$.
  Plugging \cref{eq:sigmai_si_cubic_sum_bound,eq:initial_phase_k0_condition} into this bound proves \cref{eq:sigmai_post_upper_bound}.

  To prove \cref{eq:sigmai_post_sum_bound}, summing \cref{eq:Phi_i_unrolled_bound} over $i = k_0, \ldots, k$ yields
  \begin{align}
    \sum_{i = k_0}^{k} \Phi_i
    &\leq
    \sum_{i = k_0}^{k} 2^{-i} \bar{\sigma}_{\mathrm{init}}
    + 20 L_1^{3/2} \sum_{i = k_0}^{k} \sum_{j=0}^{i-1} 2^{j-i+1} \sigma_j \norm*{s_j}^3\\
    &\leq
    2^{1 - k_0} \bar{\sigma}_{\mathrm{init}}
    + 40 L_1^{3/2} \sum_{j=0}^{k-1} \sigma_j \norm*{s_j}^3,
    \label{eq:Phi_i_sum_bound}
  \end{align}
  where the double sum is bounded by exchanging the order of summation:
  \begin{align}
    \sum_{i = k_0}^{k} \sum_{j=0}^{i-1} 2^{j-i+1} \sigma_j \norm*{s_j}^3
    &=
    \sum_{j=0}^{k-1} \sigma_j \norm*{s_j}^3
    \prn[\Bigg]{ \sum_{i = \max\set*{k_0, j+1}}^{k} 2^{j-i+1} }\\
    &\leq
    \sum_{j=0}^{k-1} \sigma_j \norm*{s_j}^3
    \prn[\Bigg]{ \sum_{i = j+1}^\infty 2^{j-i+1} }
    =
    2 \sum_{j=0}^{k-1} \sigma_j \norm*{s_j}^3.
  \end{align}
  Applying \cref{eq:initial_phase_k0_condition,eq:sigmai_si_cubic_sum_bound} to \cref{eq:Phi_i_sum_bound} gives
  \begin{align}
    \sum_{i = k_0}^{k} \Phi_i
    \leq
    2 \prn*{
      L_0 + L_1^{3/2} \Delta
    }
    + 40 L_1^{3/2} \cdot (6 \Delta).
  \end{align}
  Also, summing \cref{eq:def_Phi_i} over $i = k_0, \ldots, k$ and then using \cref{eq:sigmai_si_cubic_sum_bound} gives
  \begin{align}
    \sum_{i = k_0}^{k} \sigma_i
    &\leq
    \sum_{i = k_0}^{k} \Phi_i
    + 8 L_0 \prn*{k - k_0 + 1}
    + 24 L_1^{3/2} \cdot (6 \Delta).
  \end{align}
  Combining the last two bounds yields
  \begin{align}
    \sum_{i = k_0}^{k} \sigma_i
    &\leq
    2 \prn*{L_0 + L_1^{3/2} \Delta}
    + 40 L_1^{3/2} \cdot (6 \Delta)
    + 8 L_0 \prn*{k - k_0 + 1}
    + 24 L_1^{3/2} \cdot (6 \Delta)\\
    &=
    \prn*{8 \prn*{k - k_0} + 10} L_0 + 386 L_1^{3/2} \Delta,
  \end{align}
  which completes the proof of \cref{eq:sigmai_post_sum_bound}.
\end{proof}

\begin{remark}
  \label{rem:sigma_halving}
  The halving step $\sigma \gets \sigma_k / 2$ is essential in \cref{lem:sigma_post_sum_bound}.
  It is the source of the geometric decay in \cref{eq:Phi_i_unrolled_bound}, which is used to prove \cref{eq:sigmai_post_sum_bound}.
  Without this decay, the double sum in \cref{eq:Phi_i_sum_bound} would acquire an extra factor of $\O(k)$.
  Consequently, the $L_1^{3/2} \Delta$ term in \cref{eq:sigmai_post_sum_bound} would acquire the same extra factor, which worsens the complexity bound.
\end{remark}

\subsection{Oracle complexity}
Combining \cref{eq:g_k_plus_1_sigmak_sk_bound,eq:mu_k_sigmak_sk_bound,eq:sigmai_si_cubic_sum_bound} with \cref{lem:sigma_post_sum_bound}, we obtain the following iteration complexity bound.

\begin{theorem}
  Suppose that \cref{asm:main} holds.
  Let $\epsilon, \delta > 0$.
  Define
  \begin{align}
    m
    \coloneqq
    \ceil*{
      \Delta \max \set*{
        \frac{\sqrt{L_0}}{(\epsilon / 56)^{3/2}},\,
        \frac{\sqrt{L_1}}{\epsilon / 156},\,
        \frac{L_0^2}{(\delta / 60)^3},\,
        \frac{L_1}{\delta / 452}
      }
    }
    \geq 1
    \label{eq:def_m}
  \end{align}
  and $K \coloneqq k_0 + m$, where $k_0$ is defined by \cref{eq:def_initial_phase_k0_sigma_init_bar}.
  Then there exists an index $k \in \set*{k_0 + 1, \ldots, K}$ such that $x_k$ is an $(\epsilon, \delta)$-stationary point.
\end{theorem}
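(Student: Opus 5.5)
The plan is to argue by contradiction. Suppose that no $x_k$ with $k \in \set*{k_0+1, \ldots, K}$ is an $(\epsilon,\delta)$-stationary point. Then for each such $k$ we have either $\norm*{g_k} > \epsilon$ or $\mu_k > \delta$. I will transfer each of these failures to a lower bound on a quantity of the form $\sigma_i \norm*{s_i}^3$ with $i \in \set*{k_0, \ldots, K}$.

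\begin{itemize}
\item If $\norm*{g_k} > \epsilon$, then \cref{eq:g_k_plus_1_sigmak_sk_bound} with $i = k-1$ gives $a_i \coloneqq \sigma_i \norm*{s_i}^2 > \frac{12}{19}\epsilon$. Hence
\[
\sigma_i \norm*{s_i}^3 = a_i^{3/2}\sigma_i^{-1/2} \geq \prn*{\tfrac{12}{19}\epsilon}^{3/2} \sigma_i^{-1/2}.
\]
\item If $\mu_k > \delta$, then \cref{eq:mu_k_sigmak_sk_bound} with $i = k$ gives $b_i \coloneqq \sigma_i \norm*{s_i} > \frac{6}{7}\delta$. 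Hence
\[
\sigma_i \norm*{s_i}^3 = b_i^3 \sigma_i^{-2} \geq \prn*{\tfrac{6}{7}\delta}^3 \sigma_i^{-2}.
\]
\end{itemize}

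Let $A \subseteq \set*{k_0, \ldots, K-1}$ be the set of indices $i=k-1$ arising from the first case, and $B \subseteq \set*{k_0+1, \ldots, K}$ the set of indices $i=k$ from the second. Each of the $m$ values of $k$ contributes to at least one of these sets, and the maps $k \mapsto k-1$ and $k \mapsto k$ are injective. Therefore $\abs{A} + \abs{B} \geq m$, so $\abs{A} \geq m/2$ or $\abs{B} \geq m/2$.

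The key analytic step combines the cubic-sum bound \cref{eq:sigmai_si_cubic_sum_bound}, namely $\sum_i \sigma_i \norm*{s_i}^3 \leq 6\Delta$, with the bound on $\sum_{i=k_0}^{K} \sigma_i$ from \cref{lem:sigma_post_sum_bound}, through a Jensen/Hölder inequality.

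\emph{Case $\abs{A}\geq m/2$.} By convexity, $\sum_{i\in A}\sigma_i^{-1/2} \geq \abs{A}^{3/2}\big(\sum_{i\in A}\sigma_i\big)^{-1/2}$. Using \cref{eq:sigmai_post_sum_bound} with $k=K$ gives $\sum_{i\in A}\sigma_i \leq (8m+10)L_0 + 386L_1^{3/2}\Delta \leq 18mL_0 + 386L_1^{3/2}\Delta$, where the last step uses $m\geq 1$. Together these yield
\[
(m/2)^{3/2} \lesssim \Delta\,\epsilon^{-3/2}\bigl(\sqrt{mL_0} + L_1^{3/4}\Delta^{1/2}\bigr).
\]
Whichever term on the right dominates, we obtain either $m \lesssim \Delta\sqrt{L_0}\,\epsilon^{-3/2}$ or $m^{3/2} \lesssim \Delta^{3/2}L_1^{3/4}\epsilon^{-3/2}$. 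The latter means $m \lesssim \Delta\sqrt{L_1}/\epsilon$.

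\emph{Case $\abs{B}\geq m/2$.} Similarly, $\sum_{i\in B}\sigma_i^{-2} \geq \abs{B}^3\big(\sum_{i\in B}\sigma_i\big)^{-2}$. This gives
\[
m^3 \lesssim \Delta\,\delta^{-3}\bigl(mL_0 + L_1^{3/2}\Delta\bigr)^2,
\]
so either $m \lesssim \Delta L_0^2/\delta^3$ or $m^3 \lesssim \Delta^3 L_1^3/\delta^3$, i.e.\ $m \lesssim \Delta L_1/\delta$.

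In every subcase, the definition \cref{eq:def_m} makes $m$ strictly larger than the obtained bound once the constants $56$, $156$, $60$, $452$ are checked, which gives the contradiction.

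I expect the main obstacle to be bookkeeping rather than ideas. It consists of tracking explicit constants through the splitting of the max in each case (taking the larger of the two summands and absorbing the factor $2$) so that they match \cref{eq:def_m}, and absorbing the additive $10L_0$ using $m \geq 1$. One must also check that all indices used stay within the range $\set*{k_0,\ldots,K}$ where \cref{lem:sigma_post_sum_bound} applies. This is the reason the gradient case uses $i=k-1 \geq k_0$ and the range of $k$ starts at $k_0+1$.
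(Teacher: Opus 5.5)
Your proof is correct and uses the same inputs as the paper's proof:
\begin{itemize}
  \item \cref{eq:g_k_plus_1_sigmak_sk_bound} with the index shift $i=k-1$;
  \item \cref{eq:mu_k_sigmak_sk_bound};
  \item the cubic-sum bound \cref{eq:sigmai_si_cubic_sum_bound};
  \item \cref{eq:sigmai_post_sum_bound} with $8m+10\le 18m$;
  \item H\"older's inequality.
\end{itemize}
The difference is only in how these are combined.

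The paper argues directly. It applies H\"older in the forward direction to bound $\sum_{i=k_0+1}^{K}\norm*{g_i}$ and $\sum_{i=k_0+1}^{K}\mu_i$ from above. It then shows that the average of $\norm*{g_i}/\epsilon+\mu_i/\delta$ over the $m$ indices is at most $1$, with each of four terms at most $1/4$. This gives an index that passes both tests at once, with no case analysis.

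You argue by contradiction instead. You pigeonhole the failing indices into gradient failures $A$ and curvature failures $B$. You then use the reverse (Jensen) form of H\"older on whichever set has at least $m/2$ elements to lower-bound the cubic sum. This costs a $2\times 2$ case split and two factors of $2$: one from $\abs{A}\ge m/2$ or $\abs{B}\ge m/2$, and one from splitting the two-term sum.

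The stated constants leave enough slack that the bookkeeping you deferred does close. Using your bound $18mL_0+386L_1^{3/2}\Delta$, I checked the four subcases. Each gives a contradiction once the corresponding constant in \cref{eq:def_m} exceeds roughly $43.5$, $121$, $46.2$ and $357$. The available values are $56$, $156$, $60$ and $452$.

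The paper's route is shorter and reads the constants off term by term. Yours separates the two failure modes explicitly, at the price of the case split.
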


\begin{proof}
  Combining \cref{eq:g_k_plus_1_sigmak_sk_bound,eq:mu_k_sigmak_sk_bound} with H\"{o}lder's inequality gives
  \begin{align}
    \sum_{i = k_0 + 1}^{K} \norm*{g_i}
    &\leq
    \frac{19}{12}
    \sum_{i = k_0}^{K - 1} \prn*{\sigma_i \norm*{s_i}^3}^{2/3} \sigma_i^{1/3}
    \leq
    \frac{19}{12}
    \prn[\Bigg]{\sum_{i = k_0}^{K - 1} \sigma_i \norm*{s_i}^3}^{2/3}
    \prn[\Bigg]{\sum_{i = k_0}^{K - 1} \sigma_i}^{1/3},\\
    \sum_{i = k_0 + 1}^{K} \mu_i
    &\leq
    \frac{7}{6}
    \sum_{i = k_0 + 1}^{K} \prn*{\sigma_i \norm*{s_i}^3}^{1/3} \sigma_i^{2/3}
    \leq
    \frac{7}{6}
    \prn[\Bigg]{\sum_{i = k_0 + 1}^{K} \sigma_i \norm*{s_i}^3}^{1/3}
    \prn[\Bigg]{\sum_{i = k_0 + 1}^{K} \sigma_i}^{2/3}.
  \end{align}
  Plugging \cref{eq:sigmai_si_cubic_sum_bound,eq:sigmai_post_sum_bound} into these bounds and using $8 m + 10 \leq 18 m$ gives
  \begin{align}
    \sum_{i = k_0 + 1}^{K} \norm*{g_i}
    &\leq
    \frac{19}{12} \prn*{6 \Delta}^{2/3}
    \prn*{18 m L_0 + 386 L_1^{3/2} \Delta}^{1/3},\\
    \sum_{i = k_0 + 1}^{K} \mu_i
    &\leq
    \frac{7}{6} \prn*{6 \Delta}^{1/3}
    \prn*{18 m L_0 + 386 L_1^{3/2} \Delta}^{2/3}.
  \end{align}
  Using these bounds together with $(a + b)^p \leq a^p + b^p$ for $a, b \geq 0$ and $p \in (0, 1]$ yields
  \begin{align}
    \frac{1}{m} \sum_{i = k_0 + 1}^{K} \prn*{
      \frac{\norm*{g_i}}{\epsilon} + \frac{\mu_i}{\delta}
    }
    &\leq
    \frac{19}{12 \epsilon m} \prn*{6 \Delta}^{2/3}
    \prn*{
      \prn*{ 18 m L_0 }^{1/3}
      + \prn*{ 386 L_1^{3/2} \Delta }^{1/3}
    }\\
    &\qquad
    + \frac{7}{6 \delta m} \prn*{6 \Delta}^{1/3}
    \prn*{
      \prn*{ 18 m L_0 }^{2/3}
      + \prn*{ 386 L_1^{3/2} \Delta }^{2/3}
    }\\
    &\leq
    14 \frac{\Delta^{2/3} L_0^{1/3}}{\epsilon m^{2/3}}
    + 39 \frac{\Delta L_1^{1/2}}{\epsilon m}
    + 15 \frac{\Delta^{1/3} L_0^{2/3}}{\delta m^{1/3}}
    + 113 \frac{\Delta L_1}{\delta m}.
  \end{align}
  To ensure that each of the last four terms is at most $1/4$, it suffices to choose $m$ as in \cref{eq:def_m}.
  Then it follows that $\frac{1}{m} \sum_{i = k_0 + 1}^{K} \prn[\big]{ \frac{\norm*{g_i}}{\epsilon} + \frac{\mu_i}{\delta} } \leq 1$.
  Hence, there exists $i \in \set*{k_0 + 1, \ldots, K}$ such that $\frac{\norm*{g_i}}{\epsilon} + \frac{\mu_i}{\delta} \leq 1$, which implies that $\norm*{g_i} \leq \epsilon$ and $\mu_i \leq \delta$.
\end{proof}

The above theorem bounds the number of outer iterations $K$ and hence the oracle calls used to compute $g_k$ and $H_k$ in Line~\ref{alg-line:compute_g_H} of \cref{alg:cubic_newton_backtracking}.
It remains to count the oracle calls used to check conditions~\cref{eq:accept_decrease_condition,eq:accept_grad_condition} in Line~\ref{alg-line:check_conditions}.
Combining these counts yields the following oracle complexity bound.

\begin{corollary}
  \label{cor:epsilon_delta_oracle_complexity}
  Suppose that \cref{asm:main} holds.
  \Cref{alg:cubic_newton_backtracking} finds an $(\epsilon, \delta)$-stationary point after at most
  \begin{align}
    \O \prn*{
      \Delta \prn*{
        \frac{\sqrt{L_0}}{\epsilon^{3/2}}
        + \frac{\sqrt{L_1}}{\epsilon}
        + \frac{L_0^2}{\delta^3}
        + \frac{L_1}{\delta}
      }
      + k_0
      + \brk[\bigg]{
        \log \frac{L_0 + L_1^{3/2} \Delta}{\sigmainit}
      }_+
    }
  \end{align}
  oracle calls, where $k_0$ is defined by \cref{eq:def_initial_phase_k0_sigma_init_bar}.
  The big-$\O$ notation hides only universal constants.
\end{corollary}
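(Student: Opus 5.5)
Counting oracle calls means counting outer iterations and backtracking trials. By the preceding theorem, an $(\epsilon,\delta)$-stationary point appears among $x_{k_0+1},\dots,x_K$ with $K = k_0 + m$. Each outer iteration costs one call (Line~\ref{alg-line:compute_g_H}). Each trial in the inner loop costs one call, which evaluates $f$ and $\nabla f$ at $x_k+s$. It therefore suffices to bound the total number of trials over iterations $0,\dots,K-1$.

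Let $n_k$ be the number of doublings in iteration $k$, so that iteration $k$ uses $n_k+1$ trials. Since the first trial value is $\sigmainit$ for $k=0$ and $\sigma_{k-1}/2$ for $k\ge 1$, we have
\begin{align}
  \sigma_0 = 2^{n_0}\sigmainit,
  \qquad
  \sigma_k = 2^{n_k-1}\sigma_{k-1}\quad (k\ge1).
\end{align}
Telescoping gives
\begin{align}
  \sum_{k=0}^{K-1} n_k = (K-1) + \log_2\frac{\sigma_{K-1}}{\sigmainit}.
\end{align}
The total number of trials is therefore at most $2K + \log_2(\sigma_{K-1}/\sigmainit)$.

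To bound $\sigma_{K-1}$, note that $K-1 \ge k_0$ because $m\ge1$. Then \cref{lem:sigma_post_sum_bound} gives $\sigma_{K-1} \le 9L_0 + 145 L_1^{3/2}\Delta \le 145(L_0+L_1^{3/2}\Delta)$. Hence
\begin{align}
  \log_2\frac{\sigma_{K-1}}{\sigmainit}
  \le \log_2 145 + \brk[\bigg]{\log_2 \frac{L_0 + L_1^{3/2}\Delta}{\sigmainit}}_+.
\end{align}
The oracle total is therefore $\O(K) + \O(1) + \O([\log((L_0+L_1^{3/2}\Delta)/\sigmainit)]_+)$. Substituting $K = k_0 + m$ and the definition \cref{eq:def_m} of $m$ finishes the argument: $m \le 1 + \Delta\max\{\cdots\}$, and the maximum is bounded by a constant times the sum of the four terms. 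The additive constants are absorbed into $k_0 \ge 1$.

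The only delicate point is that the halving step makes $\sigma_k$ able to decrease, so individual $n_k$ cannot be bounded separately. The telescoping identity handles this cleanly. One also has to check that the final $\sigma_{K-1}$ lies in the regime $K-1\ge k_0$ covered by \cref{lem:sigma_post_sum_bound}, which holds because $m \ge 1$.
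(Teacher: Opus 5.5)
Your proposal is correct and follows the paper's proof essentially step for step. You telescope the number of doublings to $(K-1) + \log_2(\sigma_{K-1}/\sigmainit)$ using that each iteration $k \ge 1$ starts from $\sigma_{k-1}/2$, and then bound $\sigma_{K-1}$ via \cref{lem:sigma_post_sum_bound} after checking that $K-1 \ge k_0$ follows from $m \ge 1$.
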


\begin{proof}
  Let $K \coloneqq k_0 + m$, where $k_0$ and $m$ are defined by \cref{eq:def_initial_phase_k0_sigma_init_bar,eq:def_m}, respectively.
  Exactly one trial is accepted in Line~\ref{alg-line:check_conditions} during each of the $K$ outer iterations, so it remains to count the rejected trials.
  Since the backtracking loop in iteration $i \geq 1$ starts with $\sigma = \sigma_{i-1} / 2$, the total number of rejected trials over the $K$ iterations is
  \begin{align}
    \log_2 \frac{\sigma_0}{\sigmainit}
    + \sum_{i=1}^{K-1} \log_2 \frac{\sigma_i}{\sigma_{i-1} / 2}
    =
    \log_2 \frac{\sigma_0}{\sigmainit}
    + \sum_{i=1}^{K-1} \prn*{1 + \log_2 \frac{\sigma_i}{\sigma_{i-1}}}
    =
    (K - 1) + \log_2 \frac{\sigma_{K-1}}{\sigmainit}.
  \end{align}
  Since $K - 1 = k_0 + m - 1 \geq k_0$, we can apply \cref{eq:sigmai_post_upper_bound} to bound $\sigma_{K-1}$.
  Hence, the total number of oracle calls is at most
  \begin{align}
    \O \prn*{
      K + \brk[\bigg]{ \log \frac{L_0 + L_1^{3/2} \Delta}{\sigmainit } }_+
    },
  \end{align}
  which completes the proof.
\end{proof}

\section{Conclusion}
We analyzed a parameter-free second-order method for finding an $(\epsilon, \delta)$-stationary point under a pointwise generalized smoothness assumption weaker than the two-point assumptions used in existing analyses.
The method is a minor variant of CRN, with a modified backtracking procedure for choosing the regularization parameter $\sigma$.
Specifically, the backtracking procedure checks the two acceptance conditions~\cref{eq:accept_decrease_condition,eq:accept_grad_condition} and starts each iteration with half of the previously accepted value of $\sigma$.
The method also allows inexact solutions of the cubic-regularized subproblem.
For this method, we established the oracle complexity bound in \cref{cor:epsilon_delta_oracle_complexity}, improving on existing guarantees for parameter-free second-order methods.
In the classical smoothness case $L_1 = 0$, this bound is optimal up to additive logarithmic terms.
This resolves the open question posed by \citep{hamad2022consistently} by showing that a parameter-free second-order method can achieve a sharp bound for finding an $(\epsilon, \delta)$-stationary point.

Our result is sharp when $L_1 = 0$, but whether it is sharp when $L_1 > 0$ remains open.
One important direction for future work is to determine whether the $\O(\Delta \sqrt{L_1} \epsilon^{-1})$ and $\O(\Delta L_1 \delta^{-1})$ terms in \cref{cor:epsilon_delta_oracle_complexity} are unavoidable when $L_1 > 0$.
Another direction, motivated by existing analyses of higher-order methods under generalized smoothness~\citep{jerad2026fast}, is to extend our analysis to such methods under weak pointwise smoothness assumptions similar to the one considered in this paper.

\section*{Acknowledgments}
This work was partially supported by JSPS KAKENHI (23K28041 and 24K23853) and JST CREST (JPMJCR24Q2).

\bibliographystyle{abbrvnat_nodoi}
\bibliography{reference.bib}

\end{document}

%% file: packages.tex
\usepackage{setspace}   % line spacing
\usepackage{pdflscape}  % landscape pages
\usepackage{microtype}  % microtypography
\usepackage{xcolor}     % colors

\usepackage{amsmath,amssymb,amsthm} % AMS math
\usepackage{mathtools}              % amsmath extensions
\usepackage{mathrsfs}               % \mathscr
\usepackage{latexsym}               % extra symbols
\usepackage{bm}                     % bold math
\usepackage{graphicx}                         % graphics inclusion
\usepackage{wrapfig}                          % wrapped figures
\usepackage[subrefformat=parens]{subcaption} % subcaptions

\usepackage{array,booktabs,multirow,diagbox,threeparttable,siunitx,tabularx}
\newcolumntype{P}[1]{>{\centering\arraybackslash}p{#1}} % centered p-column
\newcolumntype{C}{>{\centering\arraybackslash}X}        % centered X-column
\usepackage{pbox} % paragraph box

\usepackage{algorithm}
\usepackage[noend]{algpseudocode}

\algnewcommand{\Break}{\textbf{break}}
\algnewcommand{\algorithmicgoto}{\textbf{go to} Line}
\algnewcommand{\Goto}[1]{\algorithmicgoto~\ref{#1}}
\algblockdefx{MRepeat}{EndRepeat}{\textbf{repeat}}{}
\algnotext{EndRepeat}

\usepackage[sort&compress,numbers,square]{natbib}
\usepackage{thmtools,thm-restate}

\usepackage{url}       % \url
\usepackage{multicol}  % multi-column text
\usepackage{wasysym}   % extra symbols
\usepackage{adjustbox} % box adjustment
\usepackage{xparse}    % command definitions
\usepackage{authblk}   % author blocks
\usepackage{xspace}    % smart spaces
\usepackage{enumitem}  % list customization

\usepackage{hyperref}
\usepackage{cleveref}

\expandafter\def\csname ver@etex.sty\endcsname{3000/12/31}

\usepackage{autonum} % number only referenced equations

\makeatletter
\autonum@generatePatchedReferenceCSL{Cref} % fix cleveref + autonum
\makeatother

\hypersetup{
  colorlinks,
  linkcolor={green!35!black},
  citecolor={green!35!black},
  urlcolor={blue!50!black}
}

%% file: settings.tex
\declaretheoremstyle[
  shaded={bgcolor=gray!15},
]{thmsty}

\declaretheorem[
  name=Theorem,
  refname={Theorem,Theorems},
  style=thmsty,
]{theorem}

\declaretheorem[
  name=Corollary,
  refname={Corollary,Corollaries},
  style=thmsty,
]{corollary}

\declaretheorem[
  name=Lemma,
  refname={Lemma,Lemmas},
  style=thmsty,
]{lemma}

\declaretheorem[
  name=Assumption,
  refname={Assumption,Assumptions},
  style=thmsty,
]{assumption}

\declaretheorem[
  name=Remark,
  refname={Remark,Remarks},
  style=remark,
]{remark}

\declaretheorem[
  name=Example,
  refname={Example,Examples},
  style=definition,
]{example}

\crefname{algorithm}{Algorithm}{Algorithms}
\crefname{line}{Line}{Lines}
\crefname{section}{Section}{Sections}
\crefname{appendix}{Appendix}{Appendices}
\crefname{table}{Table}{Tables}
\crefname{figure}{Figure}{Figures}

\crefname{equation}{}{}
\Crefname{equation}{Eq.}{Eqs.}

\setlist[itemize]{
  topsep=0.1\baselineskip,
  itemsep=0\baselineskip,
  leftmargin=1.5em,
}

\setlist[enumerate]{
  font=\upshape,
  label=(\alph*),
  ref=(\alph*),
  topsep=0.1\baselineskip,
  itemsep=0\baselineskip,
  leftmargin=2em,
}

\newlist{enuminasm}{enumerate}{1}
\setlist[enuminasm]{
  font=\upshape,
  label=(\alph*),
  ref=\theassumption(\alph*),
  topsep=0.4\baselineskip,
  itemsep=0\baselineskip,
  leftmargin=2em,
}
\crefalias{enuminasmi}{assumption}

\newlist{enuminthm}{enumerate}{1}
\setlist[enuminthm]{
  font=\upshape,
  label=(\alph*),
  ref=\thetheorem(\alph*),
  topsep=0.4\baselineskip,
  itemsep=0\baselineskip,
  leftmargin=2em,
}
\crefalias{enuminthmi}{theorem}

\newlist{enuminlem}{enumerate}{1}
\setlist[enuminlem]{
  font=\upshape,
  label=(\alph*),
  ref=\thelemma(\alph*),
  topsep=0.4\baselineskip,
  itemsep=0\baselineskip,
  leftmargin=2em,
}
\crefalias{enuminlemi}{lemma}

\newlist{enuminprop}{enumerate}{1}
\setlist[enuminprop]{
  font=\upshape,
  label=(\alph*),
  ref=\theproposition(\alph*),
  topsep=0.4\baselineskip,
  itemsep=0\baselineskip,
  leftmargin=2em,
}
\crefalias{enuminpropi}{proposition}

\newlist{enumincond}{enumerate}{1}
\setlist[enumincond]{
  font=\upshape,
  label=(\alph*),
  ref=\thecondition(\alph*),
  topsep=0.4\baselineskip,
  itemsep=0\baselineskip,
  leftmargin=2em,
}
\crefalias{enumincondi}{condition}

%% file: macros.tex
\DeclareMathOperator{\EE}{\mathbb{E}}

\DeclarePairedDelimiter\ceil{\lceil}{\rceil}

\DeclarePairedDelimiterX\inner[2]{\langle}{\rangle}{{#1},{#2}}
\DeclarePairedDelimiter\abs{\lvert}{\rvert}
\DeclarePairedDelimiter\norm{\lVert}{\rVert}

\providecommand\given{}
\newcommand\SetSymbol[1][]{%
  \nonscript\:#1\vert
  \allowbreak
  \nonscript\:
  \mathopen{}%
}

\DeclarePairedDelimiter\prn{(}{)}
\DeclarePairedDelimiter\brk{[}{]}
\DeclarePairedDelimiter\set{\lbrace}{\rbrace}

\DeclarePairedDelimiterX\Prn[2]{(}{)}{%
  \renewcommand\given{\SetSymbol[\delimsize]}%
  #1\given#2%
}

\DeclarePairedDelimiterX\Brk[2]{[}{]}{%
  \renewcommand\given{\SetSymbol[\delimsize]}%
  #1\given#2%
}

\DeclarePairedDelimiterX\Set[2]{\lbrace}{\rbrace}{%
  \renewcommand\given{\SetSymbol[\delimsize]}%
  #1\given#2%
}

\newcommand{\N}{\mathbb N}

\newcommand{\R}{\mathbb R}

\newcommand{\0}{\mathbf 0}

\newcommand{\e}{\mathrm{e}}
\newcommand{\dd}{\mathrm{d}}
\renewcommand{\O}{\mathrm{O}}

\renewcommand{\epsilon}{\varepsilon}
\NewDocumentCommand{\ex}{s o d<> m}{%
  \IfBooleanTF{#1}{%
    \IfValueTF{#3}{\EE_{#3}\brk*{#4}}{\EE\brk*{#4}}%
  }{%
    \IfValueTF{#3}{%
      \IfNoValueTF{#2}{\EE_{#3}\brk{#4}}{\EE_{#3}\brk[#2]{#4}}%
    }{%
      \IfNoValueTF{#2}{\EE\brk{#4}}{\EE\brk[#2]{#4}}%
    }%
  }%
}

\NewDocumentCommand{\cex}{s o d<> m m}{%
  \IfBooleanTF{#1}{%
    \IfValueTF{#3}{\EE_{#3}\Brk*{#4}{#5}}{\EE\Brk*{#4}{#5}}%
  }{%
    \IfValueTF{#3}{%
      \IfNoValueTF{#2}{\EE_{#3}\Brk{#4}{#5}}{\EE_{#3}\Brk[#2]{#4}{#5}}%
    }{%
      \IfNoValueTF{#2}{\EE\Brk{#4}{#5}}{\EE\Brk[#2]{#4}{#5}}%
    }%
  }%
}

\usepackage{CJKutf8}

\newcommand{\mathInd}{\hphantom{{}={}}}
\newcommand{\by}[2][]{\text{\pbox[c]{\textwidth}{(by \pbox[t]{\textwidth}{\phantom{}#2)#1}}}}
\newcommand{\email}[1]{\href{mailto:#1}{\nolinkurl{#1}}}

\makeatletter
\renewcommand{\subparagraph}{%
  \@startsection{subparagraph}{5}{\parindent}%
    {1pt}% 見出し前の縦方向の空白
    {-1em}% 負の値なので run-in heading（見出し後に改行しない）
    {\normalfont\normalsize\bfseries}%
}
\makeatother

\usepackage[table]{xcolor}